\begin{document}

\newtheorem{theorem}{Theorem}    
\newtheorem{proposition}[theorem]{Proposition}
\newtheorem{conjecture}[theorem]{Conjecture}
\def\theconjecture{\unskip}
\newtheorem{corollary}[theorem]{Corollary}
\newtheorem{lemma}[theorem]{Lemma}
\newtheorem{sublemma}[theorem]{Sublemma}
\newtheorem{observation}[theorem]{Observation}
\theoremstyle{definition}
\newtheorem{definition}{Definition}
\newtheorem{notation}[definition]{Notation}
\newtheorem{remark}[definition]{Remark}
\newtheorem{question}[definition]{Question}
\newtheorem{questions}[definition]{Questions}
\newtheorem{example}[definition]{Example}
\newtheorem{problem}[definition]{Problem}
\newtheorem{exercise}[definition]{Exercise}

\numberwithin{theorem}{section} \numberwithin{definition}{section}
\numberwithin{equation}{section}

\def\earrow{{\mathbf e}}
\def\rarrow{{\mathbf r}}
\def\uarrow{{\mathbf u}}
\def\varrow{{\mathbf V}}
\def\tpar{T_{\rm par}}
\def\apar{A_{\rm par}}

\def\reals{{\mathbb R}}
\def\torus{{\mathbb T}}
\def\heis{{\mathbb H}}
\def\integers{{\mathbb Z}}
\def\naturals{{\mathbb N}}
\def\complex{{\mathbb C}\/}
\def\distance{\operatorname{distance}\,}
\def\support{\operatorname{support}\,}
\def\dist{\operatorname{dist}\,}
\def\Span{\operatorname{span}\,}
\def\degree{\operatorname{degree}\,}
\def\kernel{\operatorname{kernel}\,}
\def\dim{\operatorname{dim}\,}
\def\codim{\operatorname{codim}}
\def\trace{\operatorname{trace\,}}
\def\Span{\operatorname{span}\,}
\def\dimension{\operatorname{dimension}\,}
\def\codimension{\operatorname{codimension}\,}
\def\nullspace{\scriptk}
\def\kernel{\operatorname{Ker}}
\def\ZZ{ {\mathbb Z} }
\def\p{\partial}
\def\rp{{ ^{-1} }}
\def\Re{\operatorname{Re\,} }
\def\Im{\operatorname{Im\,} }
\def\ov{\overline}
\def\eps{\varepsilon}
\def\lt{L^2}
\def\diver{\operatorname{div}}
\def\curl{\operatorname{curl}}
\def\etta{\eta}
\newcommand{\norm}[1]{ \|  #1 \|}
\def\expect{\mathbb E}
\def\bull{$\bullet$\ }
\def\C{\mathbb{C}}
\def\R{\mathbb{R}}
\def\Rn{{\mathbb{R}^n}}
\def\Sn{{{S}^{n-1}}}
\def\M{\mathbb{M}}
\def\N{\mathbb{N}}
\def\Q{{\mathbb{Q}}}
\def\Z{\mathbb{Z}}
\def\F{\mathcal{F}}
\def\L{\mathcal{L}}
\def\S{\mathcal{S}}
\def\supp{\operatorname{supp}}
\def\dist{\operatorname{dist}}
\def\essi{\operatornamewithlimits{ess\,inf}}
\def\esss{\operatornamewithlimits{ess\,sup}}
\def\xone{x_1}
\def\xtwo{x_2}
\def\xq{x_2+x_1^2}
\newcommand{\abr}[1]{ \langle  #1 \rangle}

\newcommand{\Norm}[1]{ \left\|  #1 \right\| }
\newcommand{\set}[1]{ \left\{ #1 \right\} }
\def\one{\mathbf 1}
\def\whole{\mathbf V}
\newcommand{\modulo}[2]{[#1]_{#2}}

\def\scriptf{{\mathcal F}}
\def\scriptg{{\mathcal G}}
\def\scriptm{{\mathcal M}}
\def\scriptb{{\mathcal B}}
\def\scriptc{{\mathcal C}}
\def\scriptt{{\mathcal T}}
\def\scripti{{\mathcal I}}
\def\scripte{{\mathcal E}}
\def\scriptv{{\mathcal V}}
\def\scriptw{{\mathcal W}}
\def\scriptu{{\mathcal U}}
\def\scriptS{{\mathcal S}}
\def\scripta{{\mathcal A}}
\def\scriptr{{\mathcal R}}
\def\scripto{{\mathcal O}}
\def\scripth{{\mathcal H}}
\def\scriptd{{\mathcal D}}
\def\scriptl{{\mathcal L}}
\def\scriptn{{\mathcal N}}
\def\scriptp{{\mathcal P}}
\def\scriptk{{\mathcal K}}
\def\frakv{{\mathfrak V}}

\title[Multilinear Marcinkiewicz integrals on Companato spaces]
{The existence and boundedness of multilinear Marcinkiewicz integrals on Companato spaces}
\author{Qingying Xue}
\address{
        Qingying Xue\\
        School of Mathematical Sciences\\
        Beijing Normal University \\
        Laboratory of Mathematics and Complex Systems\\
        Ministry of Education\\
        Beijing 100875\\
        People's Republic of China}
\email{qyxue@bnu.edu.cn}
\author{K\^{o}z\^{o} Yabuta}
\address{K\^{o}z\^{o} Yabuta\\Research center for Mathematical
Science \\Kwansei Gakuin University\\Gakuen 2-1, Sanda 669-1337\\
Japan }
\thanks{The second author was supported partly by NSFC
(No. 11471041), the Fundamental Research Funds for the Central Universities (No. 2014KJJCA10) and NCET-13-0065. The third named author was supported partly by Grant-in-Aid for Scientific Research (C) Nr. 23540228, Japan Society
for the Promotion of Science.\\ \indent Corresponding
author: Qingying Xue\indent Email: qyxue@bnu.edu.cn}
\subjclass[2000]{
Primary 42B20; Secondary 42B25.
}
%
\keywords{Multilinear Marcinkiewicz integral; Campanato space; $BMO$ spaces.}

\maketitle
\begin{abstract}In this paper, we established the boundedness of m-linear
Marcinkiewicz integral on Campanato type spaces. We showed that
if the $m$-linear Marcinkiewicz integral is finite for one point,
then it is finite almost everywhere. Moreover, the following norm inequality holds,
$$\|\mu(\vec{f})\|_{\mathcal{E}^{\alpha,p}}
\leq C\prod_{j=1}^m\|f_j\|_{\mathcal{E}^{\alpha_j,p_j}},$$
where $\mathcal{E}^{\alpha,p}$ is the classical Campanato spaces.
\end{abstract}
\section{Introduction and Main Results}

It is well known that the following classical Marcinkiewicz integral of higher
dimension was first introduced and studied by E.~M.~Stein \cite{Stein1}
in 1958.
\begin{equation}\label{def:fracMar-1}
\mu_{\Omega}(f)(x)
=\bigl(\int_{0}^{\infty}\bigl|
\int_{|y|\le t}f(x-y)\frac{\Omega(y)}{|y|^{n-1}}dy\bigr|^2\frac{dt}{t^3}
\bigr)^{\frac{1}{2}},
\end{equation}
Stein showed that the Marcinkiewicz integral $\mu_{\Omega}$
is of weak type $(1, 1)$ and type $(p, p)$ $(1 < p \leqslant
2)$, where the Lipschitz continuous function $\Omega$ is homogeneous
of degree zero and its integration on the unit sphere vanishes. Later,
Stein's $L^p$ result was further extended by Benedek, Calder\'{o}n and
Panzone \cite{BCP} to the case $1<p<\infty$ when the kernel belongs to
$C^1(S^{n-1})$.
In the connection of $\mu_{\Omega}$, the parametric Marcinkiewicz integral
operator $\mu_{\Omega,\rho}$ was first considered by H\"ormander
\cite{Hormander1} in 1960. Since then, many works have been done for
Marcinkiewicz integral or its related parametric operators.
$L^p$ boundedness for these operators were well discussed \cite{DFP1,DFP2,DLS}
and there were also some related results on function spaces, such as
Triebel-Lizorkin spaces $\dot F_{pq}^{\alpha}(\Rn)$ \cite{CFY, CZ, XQY},
Hardy spaces \cite{DLX2, SiWangJiang}, Campanato spaces,
\cite{W, Ws, WC, DLX}.
A nice survey was given by Lu \cite{Lu1}.

It is also well known that, the multilinear operators were first introduced and
studied by Coifman and Meyer \cite{CM1, CM2} in the 70s. After the celebrated
works of them, the topic was retaken by several authors, including Christ
and Journ\'{e} \cite{Crist}, Kenig and Stein \cite{KS}, Grafakos and Torres
\cite{GT, multi C-Z} and Lerner et al \cite{LPO}.
The study of multilinearization of
Littlewood-Paley's square function can be traced back to the work of Coifman
and Meyer \cite{CM3}. Some improvement can be found in the works of
Yabuta \cite{Y2}, Sato and Yabuta \cite{SY}. Recently, some weighted results
for multilinear Littlewood-Paley operators, in particular, the multilinear
Marcinkiewicz integral, were established in \cite{CX}.
To state some known results, we first introduce some definitions.

\begin{definition}\label{multilinear Marcinkiewicz integral}
(Multilinear Marcinkiewicz integral \cite{CX}).
Let $\Omega$ be a function defined on $(\R^n)^m$ with the following properties:
Let $\Omega$ be a function defined on ${(\mathbb{R}^n)}^m$ with the following
properties:
\begin{enumerate}\renewcommand{\labelenumi}{$($\roman{enumi}$)$}
\item $\Omega$ is homogeneous of degree $0$, i.e.,
\begin{equation}\label{homogeneous}
\Omega(\lambda \vec{y}) = \Omega(\vec{y});\text{\ \ \quad for any }
\lambda > 0  \text{ and}\ \ \vec{y}=(y_1,\cdots, y_m)\in(\mathbb{R}^n)^{m}.
\end{equation}
\item $\Omega$ is Lipschitz continuous on $({\mathbb{S}}^{n-1})^m$, i.e.
there are $0 < \alpha < 1$ and $C> 0$ such that for any
$\xi=(\xi_1,\cdots \xi_m),\eta=(\eta_1, \cdots, \eta_m) \in(\mathbb{R}^n)^{m}$
\begin{equation}
{|\Omega(\xi) - \Omega(\eta)|}\label{lip}  \le  C \, {|\xi' -
\eta'|}^\alpha,
\end{equation}
where $(y_1,\cdots, y_m)'=\frac{(y_1, \cdots, y_m)}{|y_1| + \cdots + |y_m|}$,
and it should be noted that $(y_1, \cdots, y_m)'$ is not an element of
$ ({\mathbb{S}}^{n -1})^m$;
 \item The integration of $\Omega$ on each unit sphere vanishes,
\begin{equation}\label{vanish}
\int_{S^{n-1} }\Omega(y_1,\cdots,y_m) \; dy_j=0,\quad j=1,\cdots,m.
\end{equation}
\end{enumerate}
For any $\vec{f}=(f_1,\cdots,f_m)\in S\times \cdots \times S$, we can define
the operator $F_t$ for any $t>0$ as
\begin{align}\label{F_t}
F_t(\vec{f})(x)
&=\frac{\chi_{(B(0,t))^m} \Omega(\vec{\cdot})}{t^m|\vec{\cdot}|^{m(n-1)}}
\ast(f_1\otimes \cdots \otimes f_m)(x)
\\
&=\frac{1}{t^m}\int_{(B(0,t))^m} \frac{\Omega(\vec{y})}{|\vec{y}|^{m(n-1)}}
\prod_{i=1}^m f_i(x-y_i)\; d\vec{y},\nonumber
\end{align}
where $|\vec{y}|=|y_1|+\cdots+|y_m|$ and $B(x,t)=\{y\in R^n:|y-x|\leq t\}$.
Finally, the multilinear Marcinkiewicz integral $\mu$ is defined by
\begin{align}\label{Marcinkiewicz integral}
\mu(\vec{f})(x)=\left(\int_0^\infty |F_t(\vec{f})(x)|^2\;
\frac{dt}{t}\right)^{1/2}.
\end{align}
\end{definition}
If $m=1$, it is easy to see that $\mu(\vec{f})$ coincides with
$\mu_{\Omega}({f})$ which was defined and studied by Stein \cite{Stein1}.
If $m\ge 2$, Chen, Xue and Yabuta \cite{CX} recently gave the following result.
\\
\noindent\textbf{Theorem A} (Estimate for $\mu$)  {\rm (\cite{CX}).}
Suppose $\mu$ is bounded from $L^{q_1}\times\cdots\times L^{q_m}$ to $L^q$
for some $1<q_1,\dots, q_m<\infty$ with $1/q=1/q_1+\cdots+1/q_m$. Then for
 $1<p_1,\cdots,p_m<\infty$ with
$\frac{1}{p}=\frac{1}{p_1}+\cdots+\frac{1}{p_m}$, there is a $C>0$ such that
$$
\|{{\mu}}(\vec{f})\|_{L^p}\leq C\prod_{i=1}^m\|f_i\|_{L^{p_i}}.
$$
\par\smallskip
To state some other results, we begin with the definition
 of Campanato type spaces.
\begin{definition} \label{Campanato space}(\textbf{Campanato space}).
(\cite{DLX})
Let $1\leq p<\infty$ and $-n/p\leq \alpha<1$. A locally integrable function
$f$ is said to belong to the Campanato space
$\mathcal{E}^{\alpha,p}(\mathbb{R}^n)$ if there exists a constant $C>0$
 such that for any ball $B\subset \mathbb{R}^n$,
\begin{equation}\|f\|_{\mathcal{E}^{\alpha,p}}:=\sup_B\frac 1{|B|^{\alpha/n}}
\bigg(\frac 1{|B|}\int_B|f(x)-f_B|^pdx\bigg)^{1/p}<\infty,
\end{equation}
where the supremum is taken over all cubes in $\Bbb R^n$ with
sides parallel to the coordinate axes, and $f_B$ denotes the
average of $f$ over $B,$ that is, $f_B=\frac 1{|B|}\int_B f(x)dx$.
\end{definition}
\begin{remark}
It is well known that if $\alpha\in(0,1)$ and $p\in[1,\infty)$, then
$\mathcal{E}^{\alpha,p}(\R^n)=Lip_\alpha(\R^n),$
with equivalent norms; if $\alpha=0$, then $\mathcal{E}^{\alpha,p}(\R^n)$
coincides with $BMO(\R^n)$; and if $\alpha\in(-n/p,0)$,
then $\mathcal{E}^{\alpha,p}(\R^n)$ coincides with the Morrey space
$L^{p,\alpha p+n}(\R^n)$.
\end{remark}
In 1987, Han \cite{Han} proved the following result, which was improved later by Lu, Ding and Xue \cite {DLX} with more weaker conditions assumed on the kernel.

\par\smallskip\noindent
{\bf Theorem B.} \label{thmD}{\it Suppose $\Omega$ is continuous on
$\mathcal {S}^{n-1}$, satisfies a $\text{Lip}_{\alpha}$ condition for
$0<\alpha\le 1$, and its integration on $\mathcal {S}^{n-1}$ vanishes.
If $f\in \textit{BMO}(\mathbb R^n)$ and $\mu_{\Omega}(f)(x)$ is finite on
a set of positive measure, then $\mu_{\Omega}(f)(x)$ is finite a.e.
on $\mathbb R^n$, and
there exists a positive constant $C$, independent of $f$, such that
$$
\|\mu_{\Omega}(f)\|_{\textit{BMO}(\mathbb R^n)}
\leq C\|f\|_{\textit{BMO}(\mathbb R^n)}.
$$
}\par\smallskip
\begin{remark}Previous works can be traced back to the celebrated theorem
given by Wang \cite{W}
and Kurtz \cite{K} for $g$-function. Theorem A also holds for Lusin's Area
integral, Littlewood-Paley
$g_\lambda^*$-function. Moreover, similar results
were treated for the above operators on Campanato type spaces (\cite{Qiu},
\cite{SakamotoY}, \cite{DLX}, \cite{HXMY}).\end{remark}
\par
In 1990, Wang and Chen \cite{WC} gave the following interesting result.
\par\smallskip\noindent
{\bf Theorem C.} \label{thmC}{\it
If $f\in \textit{BMO}(\mathbb R^n)$ and $\mu_{\Omega}(f)(x)$ is finite for
a point $x_0\in\mathbb R^n$, then $\mu_{\Omega}(f)(x)$ is finite a.e. on
$\mathbb R^n$, and
there exists a positive constant $C$, independent of $f$, such that
$\|\mu_{\Omega}(f)\|_{\textit{BMO}(\mathbb R^n)}
\leq C\|f\|_{\textit{BMO}(\mathbb R^n)}.
$}\par\smallskip
Similarly results still hold for Littlewood-Paley $g$-function,
Lusin's area integral $S$,
Littlewood-Paley $g_\lambda^*$-function.
\begin{remark} In 2003, Yabuta \cite{Y3} extended Theorem C to the case of
Campanato type spaces, also for $g_\lambda^*$-function
and Marcinkiewicz integral. In addition, in 2004, Sun \cite{S} got
similar results for $g$-function and Lusin's area integral.
\end{remark}
In this paper, we shall give a multilinear analogue of Theorem C for the
multilinear Marcinkiewicz integral.
Our main results are as follows.
\begin{theorem}\label{existence on BMO}Let $\Omega$ be a function defined on
$\mathbb{R}^{mn}$, satisfying (\ref{homogeneous}), Lipschitz continuous
condition (\ref{lip}) and (\ref{vanish}).
Suppose that $\mu$ is bounded from $L^{q_1}\times\cdots\times L^{q_m}$
to $L^q$ for some $1<q_1,\dots, q_m<\infty$ with $1/q=1/q_1+\cdots+1/q_m$.
For $f_i\in \text{BMO}(\R^n)$,
if ${{\mu}}(\vec{f})(x)$ is finite for a point
$x_0\in\mathbb{R}^n$, then
${{\mu}}(\vec{f})(x)<\infty$ a.e. on $\mathbb{R}^n$, and there exists
a positive constant $C$, independent of $\vec{f}$, such that
$$
\|{{\mu}}(\vec{f})\|_{\mathcal{\text{BMO}}}
\leq C\prod_{i=1}^m\|f_i\|_{\text{BMO}}.
$$
\end{theorem}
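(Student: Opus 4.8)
The plan is to prove both assertions from one ball-by-ball decomposition. Fix a ball $B=B(x_B,r)$, put $B^*=2B$, and for each $i$ split
$f_i=f_i^0+f_i^\infty+(f_i)_{B^*}$ with $f_i^0=(f_i-(f_i)_{B^*})\chi_{B^*}$ and $f_i^\infty=(f_i-(f_i)_{B^*})\chi_{(B^*)^c}$. Since $F_t$ in (\ref{F_t}) is linear in each slot, $F_t(\vec f)$ expands into $3^m$ terms indexed by the choices $g_i\in\{f_i^0,f_i^\infty,(f_i)_{B^*}\}$. The first observation — and the point where the cancellation (\ref{vanish}) enters — is that every term carrying at least one constant factor $(f_j)_{B^*}$ vanishes identically: writing $y_j=\rho_j\omega_j$ in polar coordinates, the factor $|\vec y|^{m(n-1)}=(\sum_i|y_i|)^{m(n-1)}$ and the remaining factors $\prod_{i\ne j}g_i(x-y_i)$ are independent of $\omega_j$, so by the homogeneity (\ref{homogeneous}) and (\ref{vanish}) the inner integral $\int_{S^{n-1}}\Omega(\dots,\rho_j\omega_j,\dots)\,d\omega_j$ is zero. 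Hence $F_t(\vec f)=\sum_{g_i\in\{f_i^0,f_i^\infty\}}F_t(\vec g)$: only the $2^m$ terms built from local and global pieces survive. Write $\vec{f^0}=(f_1^0,\dots,f_m^0)$, $H_t=F_t(\vec{f^0})$ (all factors local), and $G_t=F_t(\vec f)-H_t$ (the sum of the $2^m-1$ terms carrying at least one global factor).

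For the norm inequality, choose a point $x_B\in B$ at which $\mu(\vec f)$ is finite, set $c_B=\|G_t(x_B)\|_{L^2(dt/t)}$, and use the triangle inequality for the $L^2(dt/t)$ norm:
\[
|\mu(\vec f)(x)-c_B|\le \|H_t(x)\|_{L^2(dt/t)}+\|G_t(x)-G_t(x_B)\|_{L^2(dt/t)}=\mu(\vec{f^0})(x)+\|G_t(x)-G_t(x_B)\|_{L^2(dt/t)}.
\]
The first term is local: averaging over $B$, Hölder's inequality, the assumed boundedness $\mu\colon L^{q_1}\times\cdots\times L^{q_m}\to L^q$, and the John--Nirenberg estimate $\|f_i^0\|_{L^{q_i}}\lesssim|B|^{1/q_i}\|f_i\|_{\mathrm{BMO}}$ give $\frac1{|B|}\int_B\mu(\vec{f^0})\lesssim\prod_i\|f_i\|_{\mathrm{BMO}}$. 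The second term is the regularity contribution and is the heart of the matter.

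The hard part is the main estimate $\|G_t(x)-G_t(x_B)\|_{L^2(dt/t)}\le C\prod_i\|f_i\|_{\mathrm{BMO}}$ for $x\in B$. Passing to variables $u_i=x-y_i$, each surviving term has kernel $\mathcal K_t(x;\vec u)=t^{-m}\,\Omega(x-u_1,\dots,x-u_m)\,(\sum_i|x-u_i|)^{-m(n-1)}\prod_i\chi_{\{|x-u_i|\le t\}}$, and one estimates $\mathcal K_t(x;\vec u)-\mathcal K_t(x_B;\vec u)$ through three mechanisms: the Hölder-$\alpha$ regularity of $\Omega$ from (\ref{lip}), the smoothness of $(\sum_i|x-u_i|)^{-m(n-1)}$ off the diagonal, and the truncation-boundary difference of the characteristic functions, which is supported in a shell of width $\sim r$ about $|u_i-x|\sim t$. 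Because at least one factor is $f_i^\infty$, supported in $(B^*)^c$, the corresponding $u_i$ satisfies $|u_i-x_B|\gtrsim r$; splitting that variable into dyadic annuli $|u_i-x_B|\sim 2^k r$ gains a factor $2^{-k\delta}$ (with $\delta>0$ coming from $\alpha$ and from the $t^{-m}$ after the $dt/t$-integration), while the BMO averages over dilated balls grow only like $k\,\|f_i\|_{\mathrm{BMO}}$; the local factors are controlled inside $B^*$ by Hölder and John--Nirenberg. Summing the convergent series $\sum_k k\,2^{-k\delta}$ yields the bound. I expect the principal obstacle to be the bookkeeping of the mixed terms (several global factors interacting with several local ones) together with the shell/truncation term and the $t$-integration over $(0,\infty)$.

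Finiteness from a single point: fix $x_0$ with $\mu(\vec f)(x_0)<\infty$ and, for $N\in\N$, run the decomposition with $B^*=B(x_0,N)$. Since any global factor $f_i^\infty$ forces $|u_i-x_0|\ge N$ while the kernel requires $|u_i-x_0|\le t$, one has $G_t(x_0)=0$ for $t<N$, and the all-local term satisfies $|H_t(x_0)|\lesssim t^{-m}$ for $t\ge N$; hence the hypothesis $\int_0^\infty|F_t(\vec f)(x_0)|^2\,dt/t<\infty$ forces $\|G_t(x_0)\|_{L^2(dt/t)}<\infty$. Combining this with $\mu(\vec{f^0})(x)<\infty$ for a.e.\ $x$ (as $\mu(\vec{f^0})\in L^q$) and with the main estimate $\|G_t(x)-G_t(x_0)\|_{L^2(dt/t)}<\infty$ gives $\mu(\vec f)(x)\le\mu(\vec{f^0})(x)+\|G_t(x)-G_t(x_0)\|_{L^2(dt/t)}+\|G_t(x_0)\|_{L^2(dt/t)}<\infty$ for a.e.\ $x\in B(x_0,N/2)$; letting $N\to\infty$ proves $\mu(\vec f)<\infty$ a.e.\ on $\R^n$.
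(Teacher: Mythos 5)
Your architecture is sound as far as it goes, and much of it runs parallel to the paper's proof: the constant-killing step (vanishing moments \eqref{vanish} plus the joint homogeneity \eqref{homogeneous} applied in polar coordinates, radius by radius) is legitimate and is the same cancellation the paper exploits; your bound for the all-local term via the assumed $L^{q_1}\times\cdots\times L^{q_m}\to L^q$ boundedness and John--Nirenberg matches the paper's treatment of $\mu^r$; and your single-point-to-a.e.\ finiteness scheme is essentially the paper's. The organizational difference is that you split the functions (local/global/constant relative to $B^*=2B$) where the paper splits the $t$-integral at $8r$. The genuine gap sits in your main estimate, exactly at the term you flag as ``the principal obstacle'': the truncation/boundary contribution to $\|G_t(x)-G_t(x_B)\|_{L^2(dt/t)}$. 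Your proposed mechanism --- dyadic annuli in $u_i$ with a gain $2^{-k\delta}$ against a loss $k\|f_i\|_{\mathrm{BMO}}$ --- works for the two kernel-smoothness mechanisms (H\"older regularity \eqref{lip} of $\Omega$ and smoothness of $(\sum_j|x-u_j|)^{-m(n-1)}$), but it fails for the truncation mechanism. Pointwise in $t$, the symmetric difference of the two truncations is a shell of width $\sim r$ at radius $\sim t$, and BMO gives no control of $\int_{\mathrm{shell}}|f_i-(f_i)_{B^*}|$ better than the average over the whole ball $B(x,2t)$, i.e.\ $\lesssim t^n\log(t/r)\|f_i\|_{\mathrm{BMO}}$; hence the best pointwise-in-$t$ bound for this piece of $G_t(x)-G_t(x_B)$ is of size $(\log(t/r))^m\prod_i\|f_i\|_{\mathrm{BMO}}$, with no decay in $t$, and its $L^2(dt/t)$ norm over $t>r$ diverges. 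The factor $r$ can only be harvested from the $t$-variable: for fixed $\vec u$, the set of $t$ on which the two truncations disagree has measure $\le|x-x_B|\le 2r$.

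This is precisely why the paper never estimates an $L^2(dt/t)$ norm of a difference. It first reduces, via $|\sqrt{A}-\sqrt{B}|\le\sqrt{|A-B|}$ together with the uniform bound \eqref{eq:Ft-BMO}, to the weighted $L^1$-in-$t$ quantity $\int_{8r}^\infty|H_t(\vec f)(x,z)|\,dt/t^{m+1}$, inside which one may integrate in $t$ first and use the measure bound $|\Gamma(x,y_i)|\le 2r$ on the $t$-set (see its estimates for $H_{t,4}$ and $H_{t,5}$, followed by Lemma \ref{lem:BMO-n+beta}). Your reduction is repairable, but only with an ingredient absent from the proposal: either (a) apply Minkowski's integral inequality to pull the $L^2(dt/t)$ norm inside the $d\vec u$-integral, so that for fixed $\vec u$ one gets $\bigl(\int_{E(\vec u)}t^{-2m-1}\,dt\bigr)^{1/2}\lesssim r^{1/2}\bigl(\sum_j|x-u_j|\bigr)^{-m-1/2}$ on the disagreement set $E(\vec u)$, then conclude with a Lemma \ref{lem:BMO-n+beta}-type estimate with $\beta=1/2$; or (b) replace $\|G_t(x)-G_t(x_B)\|_{L^2(dt/t)}$ by the smaller quantity $\bigl|\|G_t(x)\|_{L^2(dt/t)}-\|G_t(x_B)\|_{L^2(dt/t)}\bigr|$, which suffices for your display, and run the paper's square-root reduction; or (c) keep your quantity and estimate the shell integral by Cauchy--Schwarz in $u_i$ plus John--Nirenberg, which yields $(rt)^{1/2}\log(t/r)\|f_i\|_{\mathrm{BMO}}$ and a convergent $dt/t$ integral. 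Without one of these, the main estimate, as described, does not go through.
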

\begin{theorem}\label{existence on Campanato space}
Let $\Omega$ be a function defined on $\mathbb{R}^{mn}$, satisfying
(\ref{homogeneous}), Lipschitz continuous condition (\ref{lip}) and
(\ref{vanish}) with index replaced by $\beta$.
Suppose that $\mu$ is bounded from $L^{q_1}\times\cdots\times L^{q_m}$
to $L^q$ for some $1<q_1,\dots, q_m<\infty$ with $1/q=1/q_1+\cdots+1/q_m$.
Suppose that $-\infty<\alpha=\alpha_1+\cdots+\alpha_m<0$ with
$\alpha_1,\cdots, \alpha_m<0$ and $n<p<\infty$, or $-m<\alpha<0$ and
$1<p<\infty$, then for $f_j\in\mathcal{E}^{\alpha_j,p_j}(\R^n)$, and
$1<p_1,\cdots,p_m<\infty$ with
$\frac{1}{p}=\frac{1}{p_1}+\cdots+\frac{1}{p_m}$, $\mu(\vec{f})$ is either
infinite everywhere or finite almost everywhere, and in the latter case,
the following inequality holds,
$$
\|\mu(\vec{f})\|_{\mathcal{E}^{\alpha,p}}
\leq C\prod_{j=1}^m\|f_j\|_{\mathcal{E}^{\alpha_j,p_j}}.
$$
\end{theorem}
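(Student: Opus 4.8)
The plan is to imitate the scalar arguments behind Theorems~B and C (and Yabuta's Campanato extension), but to first reorganize the $m$-linear object so that the cancellation (\ref{vanish}) and the square-function structure can both be used. Fix a ball $B=B(x_0,r)$ and decompose each input as $f_j=(f_j)_{2B}+f_j^0+f_j^\infty$, where $f_j^0=(f_j-(f_j)_{2B})\chi_{2B}$ and $f_j^\infty=(f_j-(f_j)_{2B})\chi_{(2B)^c}$. Expanding $F_t(\vec f)$ by multilinearity gives $3^m$ terms, and the first step is to observe that every term carrying at least one constant factor $(f_j)_{2B}$ vanishes identically. Indeed, after the scaling $y_i=tz_i$ and polar coordinates $z_j=\rho_j\omega_j$ in the $j$-th slot, the $\omega_j$-integral becomes $\int_{S^{n-1}}\Omega(z_1,\dots,\omega_j,\dots,z_m)\,d\omega_j$ with the remaining slots frozen (the denominator $|\vec z|=\sum_i|z_i|$ is $\omega_j$-independent); by homogeneity (\ref{homogeneous}) this reduces to (\ref{vanish}) and is zero. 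Applied to the all-constant term this is exactly the multilinear analogue of $\mu_\Omega(\mathrm{const})=0$. Hence $F_t(\vec f)=\sum_{S\subseteq\{1,\dots,m\}}F_t(\vec g^{\,S})$ with $g_j^{S}=f_j^0$ for $j\in S$ and $g_j^{S}=f_j^\infty$ for $j\notin S$, leaving only these $2^m$ oscillatory pieces.

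Second, I would take as Campanato reference constant $c_B=\mu(\vec g^{\,\emptyset})(x_0)$, the all-far square function at the center, and use the reverse triangle inequality in $L^2(dt/t)$ to split
\begin{align*}
|\mu(\vec f)(x)-c_B|
&\le \sum_{S\ne\emptyset}\mu(\vec g^{\,S})(x)
  +\Big(\int_0^\infty\big|F_t(\vec g^{\,\emptyset})(x)-F_t(\vec g^{\,\emptyset})(x_0)\big|^2\,\tfrac{dt}{t}\Big)^{1/2}.
\end{align*}
For the terms with $S\ne\emptyset$ (at least one local factor) I would invoke the hypothesized $L^{q_1}\times\cdots\times L^{q_m}\to L^q$ boundedness together with Theorem~A to work at the exponents $p_1,\dots,p_m$; each local factor contributes $\|f_j^0\|_{L^{p_j}}\lesssim\|f_j\|_{\mathcal E^{\alpha_j,p_j}}\,r^{\alpha_j+n/p_j}$, while in a genuinely mixed term the far factors are controlled by the pointwise size of the truncated kernel on the relevant dyadic regions. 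Averaging over $B$ and collecting the resulting powers of $r$ should reproduce the target scaling $r^{\alpha}\prod_j\|f_j\|_{\mathcal E^{\alpha_j,p_j}}$.

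Third, the all-far term is where the Hölder smoothness (\ref{lip}) of $\Omega$ (with index $\beta$) is used: writing each $f_j^\infty$ as a sum over the annuli $2^{k+1}B\setminus2^kB$, I would bound the kernel difference $\big|\,\Omega(\vec y)|\vec y|^{-m(n-1)}\chi_{(B(0,t))^m}-(\text{its }x_0\text{-translate})\,\big|$ by a gain $(|x-x_0|/2^kr)^{\beta}$, pair it against $\prod_j\|f_j-(f_j)_{2B}\|$ on the annuli, and perform the $dt/t$ integration, which converges because the truncation forces $t\gtrsim 2^kr$. The ensuing series in $k$ converges in precisely the two regimes of the statement, $\alpha_1,\dots,\alpha_m<0$ with $n<p$, or $-m<\alpha<0$ with $1<p$, since there the Campanato growth of the $f_j$ across annuli is dominated by the decay $2^{-k\beta}$. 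Running the same three estimates on the difference $\mu(\vec f)(x)-\mu(\vec f)(x')$ in place of the oscillation shows this difference is finite for a.e.\ pair $(x,x')$, which yields the dichotomy: $\mu(\vec f)$ is finite a.e.\ once it is finite at a single point, and infinite everywhere otherwise.

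The step I expect to be the main obstacle is the family of mixed terms $S\ne\emptyset,\{1,\dots,m\}$, in which local and far factors coexist and the coupled weight $|\vec y|=|y_1|+\cdots+|y_m|$ does not factor across the variables; there one must combine the $L^{p_j}$ control of the local slots with uniform tail bounds on the far slots and still extract the correct $r$-power, all while keeping the $t$-integral and the dyadic sum convergent over the full parameter range. Verifying that convergence simultaneously in both regimes of $\alpha$ and $p$, rather than the size bookkeeping of any single term, is where the real work lies.
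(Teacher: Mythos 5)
Your slot-by-slot cancellation observation (terms containing a constant factor $(f_j)_{2B}$ vanish, because the denominator $|\vec{y}|$ is independent of the direction of $y_j$ and (\ref{vanish}) applies after a joint dilation) is correct, and it is also the paper's starting point. But your overall architecture --- decomposing the \emph{functions} into local/far pieces rather than splitting the \emph{$t$-integral} --- creates a genuine gap in the dichotomy. Your reference constant $c_B=\mu(\vec{g}^{\,\emptyset})(x_0)$ must be finite for the oscillation estimate to say anything, and the only route to its finiteness is $\mu(\vec{g}^{\,\emptyset})(x_0)\le\mu(\vec{f})(x_0)+\sum_{S\ne\emptyset}\mu(\vec{g}^{\,S})(x_0)$. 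The hypothesis gives finiteness of $\mu(\vec{f})$ at one prescribed point only, while the terms $\mu(\vec{g}^{\,S})$, $S\ne\emptyset$, are controlled only almost everywhere; the prescribed point may lie in their exceptional null set. ``Finite for a.e.\ pair $(x,x')$'' does not upgrade ``finite at one point'' to ``finite a.e.'', which is exactly what the theorem asserts. The paper's truncation in $t$ at height $8r$, keeping $\vec{f}$ intact, is designed to avoid this: $\mu^\infty(\vec{f})\le\mu(\vec{f})$ pointwise, so finiteness at $y_0$ passes to $\mu^\infty(\vec{f})(y_0)$ for free; the estimate $|\mu^\infty(\vec{f})(x)-\mu^\infty(\vec{f})(z)|\le Cr^\alpha$ holds for \emph{all} (not a.e.) $x,z\in B$ and propagates finiteness to the whole ball; and $\mu^r(\vec{f})$ is finite a.e.\ by Theorem A. Your all-far piece $\mu(\vec{g}^{\,\emptyset})$ admits no pointwise comparison with $\mu(\vec{f})$, so this propagation mechanism is lost.

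Beyond this, the two estimates you defer are not routine, and one is stated incorrectly. First, the mixed terms cannot be handled by ``Theorem A plus tail bounds'': Theorem A is a black-box bound on the full $m$-linear operator requiring all inputs in $L^{p_j}$, and $f_j^\infty\notin L^{p_j}(\mathbb{R}^n)$; once you freeze the far slots by dyadic tail estimates you are looking at a different, lower-linearity operator with a non-homogeneous kernel, for which no boundedness hypothesis exists. These terms must be estimated by hand against Campanato growth, which is precisely the content of the paper's Lemmas \ref{lem:BMO-decreasing}--\ref{lem:BMO-n+beta} and its analysis of the terms $H_{t,2},\dots,H_{t,5}$, $H_{t,6}^l$, $H_{t,7}^l$; this is the bulk of the proof, not a final verification. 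Second, for the all-far term you claim the difference of truncated kernels is bounded by a gain $(|x-x_0|/2^kr)^{\beta}$. That bound is false on the domain-mismatch regions where one cutoff $\chi_{(B(0,t))^m}$ equals $1$ and its translate equals $0$: there the difference has the full size of the kernel, with no $\beta$-gain. The paper handles exactly this through the regions $\Gamma(x,t)$, $\Lambda(x,t)$, using that for fixed $y_i$ the set of $t$ producing a mismatch is an interval of length at most $\bigl||z-y_i|-|x-y_i|\bigr|\le 2r$, and integrating in $t$ first. Without that mechanism your $dt/t$ integration does not produce the required power $r^{2\alpha}$ after pairing with $t^{-(m+1-\alpha)}$, and the argument does not close.
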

\begin{theorem}\label{existence on Lipschitz space}
Let $\Omega$ be the same as in the above theorem.
Suppose that $0<\alpha <\beta/2$, and $f_j\in Lip_{\alpha_j},\,j=1,\cdots,m$.
Then $\mu(\vec{f})$ is either infinite everywhere or  finite almost
everywhere and in the latter case,
$$
\|\mu(\vec{f})\|_{Lip_\alpha}\leq C\prod_{j=1}^m\|f_j\|_{Lip_{\alpha_j}},
$$
where $C>0$ is independent of $\vec{f}$.
\end{theorem}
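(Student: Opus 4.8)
The plan is to reduce the whole statement to a single H\"older-type estimate for the difference of $\mu(\vec f)$ at two points. Writing $\alpha=\alpha_1+\cdots+\alpha_m$ and using the triangle inequality for the $L^2(dt/t)$ norm, one has
$$|\mu(\vec f)(x)-\mu(\vec f)(x')|\le\Big(\int_0^\infty|F_t(\vec f)(x)-F_t(\vec f)(x')|^2\,\frac{dt}{t}\Big)^{1/2}=:D(x,x').$$
So it suffices to prove the bound $D(x,x')\le C|x-x'|^\alpha\prod_{j=1}^m\|f_j\|_{Lip_{\alpha_j}}$, which will automatically be finite for every pair $x,x'$. Granting this, the whole theorem follows at once: since $\mu(\vec f)(x)\le \mu(\vec f)(x')+D(x,x')$ in $[0,\infty]$, the function $\mu(\vec f)$ is either identically $+\infty$ or finite everywhere (in particular a.e.), and in the finite case the displayed inequality is exactly the $Lip_\alpha$ estimate after taking the supremum over $x\ne x'$.

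The first device I will use is a free subtraction of constants. Integrating out the $j$-th variable, using homogeneity (\ref{homogeneous}) to normalise it to the unit sphere and then the vanishing property (\ref{vanish}), one checks that $F_t(h_1,\dots,h_m)\equiv0$ whenever some entry $h_j$ is constant; expanding multilinearly this yields $F_t(\vec f)=F_t(f_1-c_1,\dots,f_m-c_m)$ for arbitrary constants $c_1,\dots,c_m$. I will apply this with $g_j:=f_j-f_j(x)$, so that $|g_j(x-y_j)|\le\|f_j\|_{Lip_{\alpha_j}}|y_j|^{\alpha_j}$, and the same identity will let me compute both $F_t(\vec f)(x)$ and $F_t(\vec f)(x')$ from the single family $\vec g$.

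With $\delta=|x-x'|$ I then split the $t$-integral at $t=\delta$. For $t\le\delta$ I estimate $F_t(\vec f)(x)$ and $F_t(\vec f)(x')$ separately: the oscillation bound on the $g_j$'s together with a scaling computation of $t^{-m}\int_{(B(0,t))^m}|\vec y|^{-m(n-1)}\prod_j|y_j|^{\alpha_j}\,d\vec y\sim t^{m+\alpha}$ gives $|F_t(\vec f)(\cdot)|\le C\prod_j\|f_j\|_{Lip_{\alpha_j}}\,t^\alpha$, whence the contribution of $t\le\delta$ to $D(x,x')$ is $\lesssim\delta^\alpha\prod_j\|f_j\|_{Lip_{\alpha_j}}$. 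For $t>\delta$ I change variables to move the translation onto the kernel, writing
$$F_t(\vec g)(x)-F_t(\vec g)(x')=\frac1{t^m}\int[K_t(\vec y)-K_t(\vec y-\vec h)]\prod_{j=1}^m g_j(x-y_j)\,d\vec y,$$
where $K_t(\vec y)=|\vec y|^{-m(n-1)}\Omega(\vec y)\chi_{(B(0,t))^m}(\vec y)$ and $\vec h=(h,\dots,h)$ with $h=x-x'$.

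The heart of the matter, and the step I expect to be the main obstacle, is controlling this kernel difference, which breaks into a smooth interior part and a boundary part produced by the truncation $\chi_{(B(0,t))^m}$. On the interior the Lipschitz regularity (\ref{lip}) of $\Omega$ (via $|\vec y'-(\vec y-\vec h)'|\lesssim\delta/|\vec y|$) together with the smoothness of $|\vec y|^{-m(n-1)}$ furnishes a gain of at least $(\delta/|\vec y|)^\beta$, while on the boundary, i.e.\ the annular shells $\big||y_j|-t\big|\lesssim\delta$ arising when one factor leaves the ball and the others do not, no such gain is available and one is left with the full size of the kernel over a set of measure $\sim\delta\,t^{mn-1}$. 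Carrying both pieces through the scaling computation and then integrating in $L^2(dt/t)$ over $t>\delta$ is what forces the exponents to balance, and requiring every resulting $t$-integral to converge at infinity is precisely what pins down the admissible range $0<\alpha<\beta/2$. Combining the two regimes then yields $D(x,x')\le C|x-x'|^\alpha\prod_{j=1}^m\|f_j\|_{Lip_{\alpha_j}}$, which completes the proof.
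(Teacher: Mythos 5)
Your proposal is correct, and it takes a genuinely different route from the paper. The paper proves Theorem \ref{existence on Lipschitz space} by rerunning the machinery of Theorem \ref{existence on Campanato space}: it splits $\mu$ at the scale $8r$ of a fixed ball, controls the small-$t$ piece by the pointwise bound \eqref{estimate for F_t(f)2}, and for the large-$t$ piece uses $|\sqrt{A}-\sqrt{B}|\le\sqrt{|A-B|}$, which produces the product $|F_t(x)+F_t(z)|\,|F_t(x)-F_t(z)|$ and hence costs an extra factor $t^{\alpha}$; the difference $F_t(x)-F_t(z)$ is then decomposed spatially over the sets $\Xi,\Gamma,\Lambda$ into the terms $H_{t,1},\dots,H_{t,7}^{l}$ and estimated via Lemmas \ref{lem:BMO-n-1}--\ref{lem:BMO-n+beta}. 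You instead use Minkowski's inequality in $L^{2}(dt/t)$, split at $t=|x-x'|$, and transfer the entire difference onto the kernel by translation, separating a Lipschitz-gain interior part from truncation shells; your interior part plays the role of the paper's $H_{t,1}$, and your shells play the role of the $\Gamma,\Lambda$ terms $H_{t,2},\dots,H_{t,7}^l$. Your reduction buys several things: the exponents close under $\alpha<\min(\beta,1)$ rather than $\alpha<\beta/2$ (the stronger condition is forced in the paper's scheme precisely by the extra $t^{\alpha}$, through Lemma \ref{lem:BMO-n+beta} applied with $\gamma=\alpha$, whose hypothesis is $\alpha+\gamma<\beta$), you obtain finiteness everywhere rather than almost everywhere, and you never invoke the $L^{q_1}\times\cdots\times L^{q_m}\to L^{q}$ boundedness hypothesis. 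What the paper's heavier apparatus buys is uniformity: its averaged lemmas treat BMO and negative-exponent Campanato spaces by the same argument, whereas your method hinges on the pointwise oscillation bound $|f_j(y)-f_j(x)|\le\|f_j\|_{Lip_{\alpha_j}}|y-x|^{\alpha_j}$, which exists only in the Lipschitz range.

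Two small points to fix in a full write-up, neither of which threatens the argument. First, for $t\le\delta$ you must recentre the subtracted constants at each evaluation point: with $g_j=f_j-f_j(x)$ one only gets $|F_t(\vec f)(x')|\lesssim\delta^{\alpha}\prod_j\|f_j\|_{Lip_{\alpha_j}}$, and $\int_0^{\delta}\delta^{2\alpha}\,\frac{dt}{t}$ diverges; since constants subtract freely, use $f_j-f_j(x')$ for the point $x'$ to recover the decay $t^{\alpha}$, and reserve the single family $\vec g$ for the regime $t>\delta$, where it is exactly what the translation trick needs. Second, in the interior part the bound $\delta^{\beta}|\vec y|^{-m(n-1)-\beta}$ is only valid for $|\vec y|\gtrsim\delta$ and $|\vec y-\vec h|\gtrsim\delta$; the two diagonal regions must be handled by estimating each kernel separately, which gives a contribution $\lesssim t^{-m}\delta^{m+\alpha}$, square-integrable against $dt/t$ on $(\delta,\infty)$ with total $\lesssim\delta^{2\alpha}$. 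Finally, your remark that convergence of the $t$-integrals ``pins down'' the range $0<\alpha<\beta/2$ is not accurate: your method only requires $\alpha<\min(\beta,1)$, so under the stated hypothesis everything goes through with room to spare.
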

The article is organized as follows. Some basic lemmas will be presented in Section \ref{Sec-2}. The proof of Theorem \ref{existence on BMO} will be given in Section \ref{proof 1.1}. In Section \ref{Sec-4}, we will demonstrate  the proofs of Theorem \ref{existence on Campanato space} and Theorem \ref{existence on Lipschitz space}. Finally, in Section \ref{Sec-5}, some extensions will be presented for the operators with separated kernels and more general type kernels . 
\section{Some basic lemmas}\label{Sec-2}

To prove our Theorems, we prepare some lemmas.
\begin{lemma}\label{lem:BMO-decreasing}
Let $1\le p<\infty$, $B=B(x_0,r)$, $x\in B$ and $t>8r>0$.
Then for $0\le k\le k_0$ with
$k_0\in\N$ satisfying $2r\le 2^{-k_0}t<4r$ we have
\begin{align*}
\biggl(\frac{1}{|B(x,2^{-k}t)|}\int_{B(x,2^{-k}t)}|f(y)-f_{B}|\,dy
\biggr)^{\frac1p}
&\le c_n\Bigl(\sum_{j=k}^{k_0}2^{-j\alpha}\Bigr)t^\alpha
\|f\|_{\mathcal {E}^{p,\alpha}}
\\
&\le\begin{cases}
cr^\alpha\|f\|_{\mathcal {E}^{p,\alpha}} &\alpha<0,\\
c\log\frac{t}{r}\|f\|_{\mathcal {E}^{p,\alpha}} &\alpha=0,\\
ct^\alpha\|f\|_{\mathcal {E}^{p,\alpha}} &\alpha>0,
\end{cases}
\end{align*}
\end{lemma}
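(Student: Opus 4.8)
The plan is to run a telescoping comparison of averages across the chain of concentric balls $B_j:=B(x,2^{-j}t)$ for $k\le j\le k_0$, together with the original ball $B=B(x_0,r)$, and then to sum the resulting geometric series in the three sign-regimes of $\alpha$. The only input I need from the definition of the Campanato norm is that for any ball $Q$ of radius $\rho$,
\[
\Bigl(\tfrac{1}{|Q|}\int_Q|f-f_Q|^p\Bigr)^{1/p}\le c_n\,\rho^\alpha\,\|f\|_{\mathcal{E}^{\alpha,p}},
\]
and that, by Jensen's inequality, the same quantity controls the $L^1$-average $\tfrac{1}{|Q|}\int_Q|f-f_Q|$ (this reconciles the $L^1$-integrand appearing in the statement with the $L^p$ oscillation that the norm measures).

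First I would split, by the triangle inequality in $L^p$,
\[
\Bigl(\tfrac{1}{|B_k|}\int_{B_k}|f-f_B|^p\Bigr)^{1/p}
\le\Bigl(\tfrac{1}{|B_k|}\int_{B_k}|f-f_{B_k}|^p\Bigr)^{1/p}+|f_{B_k}-f_B|,
\]
the first term being at once bounded by $c_n(2^{-k}t)^\alpha\|f\|_{\mathcal{E}^{\alpha,p}}$, which is the $j=k$ term of the target sum. For the second term I would telescope through the nested balls: since $B_{j+1}\subset B_j$ and $|B_j|=2^n|B_{j+1}|$, each consecutive difference obeys
\[
|f_{B_j}-f_{B_{j+1}}|\le\tfrac{1}{|B_{j+1}|}\int_{B_{j+1}}|f-f_{B_j}|
\le 2^n\cdot\tfrac{1}{|B_j|}\int_{B_j}|f-f_{B_j}|
\le c_n(2^{-j}t)^\alpha\|f\|_{\mathcal{E}^{\alpha,p}}.
\]
Summing over $k\le j<k_0$ and appending the endpoint comparison $|f_{B_{k_0}}-f_B|$ gives $|f_{B_k}-f_B|\le c_n t^\alpha\sum_{j=k}^{k_0}2^{-j\alpha}\|f\|_{\mathcal{E}^{\alpha,p}}$, and combining with the first term produces the first displayed inequality of the lemma.

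The endpoint comparison is where the hypotheses do their work. Because $x\in B=B(x_0,r)$ we have $|x-x_0|<r$, so every $y\in B$ satisfies $|y-x|<2r\le 2^{-k_0}t$, giving $B\subset B_{k_0}$; moreover $2^{-k_0}t<4r$ forces $|B_{k_0}|/|B|=(2^{-k_0}t/r)^n<4^n$. Hence
\[
|f_{B_{k_0}}-f_B|\le\tfrac{1}{|B|}\int_{B}|f-f_{B_{k_0}}|
\le 4^n\cdot\tfrac{1}{|B_{k_0}|}\int_{B_{k_0}}|f-f_{B_{k_0}}|
\le c_n(2^{-k_0}t)^\alpha\|f\|_{\mathcal{E}^{\alpha,p}},
\]
which is exactly the $j=k_0$ term. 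It remains to evaluate $S:=\sum_{j=k}^{k_0}2^{-j\alpha}$, using that $2r\le 2^{-k_0}t<4r$ pins down $k_0\approx\log_2(t/r)$. For $\alpha>0$ the sum is dominated by its first term, $S\le c_\alpha 2^{-k\alpha}\le c_\alpha$, so $t^\alpha S\le c\,t^\alpha$; for $\alpha=0$ one has $S=k_0-k+1\le c\log\frac{t}{r}$; and for $\alpha<0$ the sum is dominated by its last term, $S\le c_\alpha 2^{-k_0\alpha}$, whence $t^\alpha S\le c\,(2^{-k_0}t)^\alpha\approx c\,r^\alpha$. This yields the three cases.

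I expect the main obstacle to be purely bookkeeping rather than conceptual: namely, verifying the chain of inclusions and the two-sided comparability of the Lebesgue measures at the ends of the chain—precisely the passage from $B_{k_0}$ down to $B$ and back—which is where the constraints $x\in B$ and $2r\le 2^{-k_0}t<4r$ are genuinely used, and tracking the dimensional constants $2^n$, $4^n$ so that they may be absorbed into $c_n$. Everything downstream (the telescoping estimate and the geometric-series evaluation) is routine.
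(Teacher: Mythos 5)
Your proof is correct and follows essentially the same route as the paper's: the same splitting into the oscillation on $B(x,2^{-k}t)$ plus a telescoping chain of consecutive-ball average differences, the same doubling estimates with constants $2^n$ and $4^n$, the same endpoint comparison between $B(x,2^{-k_0}t)$ and $B$ using $2r\le 2^{-k_0}t<4r$, and the same three-regime evaluation of the geometric series. If anything, you are slightly more careful than the paper in verifying the inclusion $B\subset B(x,2^{-k_0}t)$ and in reconciling, via Jensen's inequality, the $L^1$ integrand of the statement with the $L^p$ oscillation that the Campanato norm controls.
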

\begin{proof}
We see easily that
\begin{align*}
&\biggl(\frac{1}{|B(x,2^{-k}t)|}\int_{B(x,2^{-k}t)}|f(y)-f_{B}|dy
\biggr)^{\frac1p}
\\
&\le\biggl(\frac{1}{|B(x,2^{-k}t)|}\int_{B(x,2^{-k}t)}|f(y)-f_{B(x,2^{-k}t)}|dy
\biggr)^{\frac1p}
\\
&\ +|f_{B(x,2^{-k}t)}-f_{B(x,2^{-k-1}t)}|+\cdots
+|f_{B(x,2^{-k_0+1}t)}-f_{B(x,2^{-k_0}t)}|+|f_{B(x,2^{-k_0}t)}-f_{B}|.
\end{align*}
The first term is bounded by $(2^{-k}t)^\alpha\|f\|_{\mathcal {E}^{p,\alpha}}$,
and
\begin{align*}
|f_{B(x,2^{-j}t)}-f_{B(x,2^{-j-1}t)}|
&\le
\frac{|B(x,2^{-j}t)|}{|B(x,2^{-j-1}t)|}\frac{1}{|B(x,2^{-j}t)|}
\int_{B(x,2^{-j-1}t)}|f(y)-f_{B(x,2^{-j}t)}|dy
\\
&\le c_n2^n\frac{1}{|B(x,2^{-j}t)|}
\int_{B(x,2^{-j}t)}|f(y)-f_{B(x,2^{-j}t)}|dy
\\
&\le c_n2^n(2^{-j}t)^\alpha\|f\|_{\mathcal {E}^{p,\alpha}}
\end{align*}
for $j=k,\dots,k_0$. For the last term, we have
\begin{align*}
|f_{B(x,2^{-k_0}t)}-f_{B}|
&\le \frac{1}{|B|}\int_{B}|f(y)-f_{B(x,2^{-k_0}t)}|dy
\\
&\le \frac{|B(x,2^{-k_0}t)|}{|B|}\frac{1}{|B(x,2^{-k_0}t)|}
\int_{B(x,2^{-k_0}t)}|f(y)-f_{B(x,2^{-k_0}t)}|dy
\\
&\le c_n4^n(2^{-k_0}t)^\alpha\|f\|_{\mathcal {E}^{p,\alpha}}.
\end{align*}
Altogether, we obtain
\begin{equation*}
\frac{1}{|B(x,2^{-k}t)|}\int_{B(x,2^{-k}t)}|f(y)-f_{B}|dy
\le c_n\Bigl(\sum_{j=k}^{k_0}2^{-j\alpha}\Bigr)\|f\|_{\mathcal {E}^{p,\alpha}}.
\end{equation*}
Since $2r\le 2^{-k_0}t<4r$, we have the required estimate.
\end{proof}

\begin{lemma}\label{lem:BMO-increasing}
Let $1\le p<\infty$, $B=B(x_0,r)$, $k\in\N$, and $\alpha\in\R$. Then for
$x\in B$
\begin{align*}
\biggl(\frac{1}{|B(x,2^{k}r)|}\int_{B(x,2^{k}r)}|f(y)-f_{B}|dy\biggr)^{\frac1p}
&\le c_n\Bigl(\sum_{\ell=0}^{k}2^{\alpha\ell}\Bigr)r^\alpha
\|f\|_{\mathcal {E}^{p,\alpha}(\Rn)}
\\
&\le
\begin{cases}
c_0r^{\alpha}\|f\|_{\mathcal {E}^{p,\alpha}(\Rn)}, &\alpha<0,\\
c_1k\|f\|_{\mathcal {E}^{p,\alpha}(\Rn)}, &\alpha=0,\\
c_2(2^kr)^{\alpha}\|f\|_{\mathcal {E}^{p,\alpha}(\Rn)}, & \alpha>0.
\end{cases}.
\end{align*}
\end{lemma}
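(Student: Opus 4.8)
The statement is the expanding counterpart of Lemma~\ref{lem:BMO-decreasing}, so the natural plan is to run the same telescoping argument, but now along the increasing chain of concentric balls $B(x,r)\subset B(x,2r)\subset\cdots\subset B(x,2^{k}r)$ rather than a shrinking one. First I would split, by the triangle inequality,
$$|f(y)-f_{B}|\le |f(y)-f_{B(x,2^{k}r)}|+|f_{B(x,2^{k}r)}-f_{B}|,$$
average over $B(x,2^{k}r)$, and observe that the first term contributes
$$\frac{1}{|B(x,2^{k}r)|}\int_{B(x,2^{k}r)}|f(y)-f_{B(x,2^{k}r)}|\,dy\le (2^{k}r)^{\alpha}\|f\|_{\mathcal {E}^{p,\alpha}(\Rn)}$$
directly from the definition of the Campanato norm. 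It then remains to control the constant $|f_{B(x,2^{k}r)}-f_{B}|$.

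For that term I would telescope through the intermediate averages,
$$|f_{B(x,2^{k}r)}-f_{B}|\le |f_{B}-f_{B(x,2r)}|+\sum_{\ell=1}^{k-1}|f_{B(x,2^{\ell}r)}-f_{B(x,2^{\ell+1}r)}|,$$
and bound each consecutive difference by comparing an average over the smaller ball with the mean over the larger one, exactly as in the proof of Lemma~\ref{lem:BMO-decreasing}, picking up the doubling factor $c_{n}2^{n}$ each time. The one geometric input where the \emph{off-center} ball $B=B(x_{0},r)$ enters is the base comparison: since $x\in B$ we have $|x-x_{0}|\le r$, hence $B\subset B(x,2r)$, which gives $|f_{B}-f_{B(x,2r)}|\le c_{n}2^{n}(2r)^{\alpha}\|f\|_{\mathcal {E}^{p,\alpha}(\Rn)}$. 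Each remaining difference is controlled by $c_{n}2^{n}(2^{\ell+1}r)^{\alpha}\|f\|_{\mathcal {E}^{p,\alpha}(\Rn)}$.

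Adding the main term to the telescoping sum and writing $(2^{\ell}r)^{\alpha}=r^{\alpha}2^{\alpha\ell}$ collects everything into
$$\frac{1}{|B(x,2^{k}r)|}\int_{B(x,2^{k}r)}|f(y)-f_{B}|\,dy\le c_{n}\,r^{\alpha}\Bigl(\sum_{\ell=0}^{k}2^{\alpha\ell}\Bigr)\|f\|_{\mathcal {E}^{p,\alpha}(\Rn)},$$
which is the first asserted bound. The three displayed cases are then elementary geometric-series estimates: for $\alpha<0$ the sum is bounded by $(1-2^{\alpha})^{-1}$, giving the factor $r^{\alpha}$; for $\alpha=0$ it equals $k+1$; and for $\alpha>0$ it is comparable to $2^{\alpha k}$, producing the factor $(2^{k}r)^{\alpha}$.

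I expect no genuine obstacle here; the computation is routine and structurally identical to Lemma~\ref{lem:BMO-decreasing}. The only points demanding care are that the terms $(2^{\ell}r)^{\alpha}$ now \emph{increase} in $\ell$ when $\alpha>0$, so the dominant contribution comes from the largest ball $B(x,2^{k}r)$ and the geometric sum must be summed to the correct endpoint, and that the base comparison $B\subset B(x,2r)$ must be justified from $x\in B$ because $B$ is centered at $x_{0}\neq x$.
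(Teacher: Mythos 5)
Your proposal is correct and follows essentially the same route as the paper's own proof: the identical telescoping through the chain $B\subset B(x,2r)\subset\cdots\subset B(x,2^k r)$, the same doubling estimate $|f_{B(x,2^{\ell+1}r)}-f_{B(x,2^{\ell}r)}|\le c_n 2^n(2^{\ell+1}r)^{\alpha}\|f\|_{\mathcal{E}^{\alpha,p}}$ for consecutive averages, and the same base comparison using $x\in B$ to get $B\subset B(x,2r)$. The concluding geometric-series case analysis also matches the paper, so there is nothing further to reconcile.
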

\begin{proof}
We see that
\begin{align*}
|f_{B(x,2^{\ell+1}r)}-f_{B(x,{2^\ell}r)}|
&\le \frac{|B(x,2^{\ell+1}r)|^{1+\frac{\alpha}{n}}}{|B(x,2^{\ell}r)|}
\frac{1}{|B(x,2^{\ell+1}r)|^{1+\frac{\alpha}{n}}}\int_{B(x,2^{\ell+1}r)}
|f(y)-f_{B(x,2^{\ell+1}r)}|dy
\\
&\le 2^nc_n(2^{\ell+1}r)^\alpha\|f\|_{\mathcal {E}^{\alpha,1}(\Rn)}.
\end{align*}
Hence we have
\begin{align*}
&\biggl(\frac{1}{|B(x,2^{k}r)|}\int_{B(x,2^{k}r)}|f(y)-f_{B}|dy\biggr)^{\frac1p}
\\
&\le\biggl(\frac{1}{|B(x,2^{k}r)|}\int_{B(x,2^{k}r)}|f(y)-f_{B(x,2^{k}r)}|dy
\biggr)^{\frac1p}
\\
&\ \ +|f_{B(x,2^{k}r)}-f_{B(x,2^{k-1}r)}|+|f_{B(x,2^{k-1}r)}-f_{B(x,2^{k-2}r)}|
+\cdots+|f_{B(x,2r)}-f_{B}|
\\
&\le |B(x,2^{k}r)|^{\frac{\alpha}{n}}\|f\|_{\mathcal {E}^{p,\alpha}(\Rn)}
+2^nc_n(2^{k}r)^\alpha\|f\|_{\mathcal {E}^{\alpha,1}(\Rn)}
+\dots+2^nc_nr^\alpha\|f\|_{\mathcal {E}^{\alpha,1}(\Rn)}
\\
&\le c\Bigl(\sum_{\ell=0}^{k}2^{\alpha\ell}\Bigr)r^\alpha
\|f\|_{\mathcal {E}^{p,\alpha}(\Rn)}.
\end{align*}
from which it follows the desired estimates.
\end{proof}

\begin{lemma}\label{lem:BMO-n-1}
Let $B=B(x_0,r)$. Let $x\in B$, $t>8r$ and $\alpha\in\R$. Then
\begin{equation}
\int_{8r\le|x-y|<t}\frac{|f(y)-f_B|}{|x-y|^{n-1}}\,dy
\le
\begin{cases}
ctr^\alpha\|f\|_{\mathcal {E}^{\alpha,1}(\Rn)}, &\alpha<0,\\
ct\log\frac{t}{r}\|f\|_{\mathcal {E}^{\alpha,1}(\Rn)}, &\alpha=0,\\
ct^{1+\alpha}\|f\|_{\mathcal {E}^{\alpha,1}(\Rn)}, &\alpha>0.
\end{cases}
\end{equation}
\end{lemma}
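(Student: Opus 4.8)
The plan is to split the annular region $\{8r\le|x-y|<t\}$ into dyadic shells centred at $x$, replace $1/|x-y|^{n-1}$ by its largest value on each shell, and recognise that what remains is exactly the ball-average of $|f-f_B|$ controlled by Lemma~\ref{lem:BMO-decreasing}. Concretely, I would choose $k_0\in\N$ as in Lemma~\ref{lem:BMO-decreasing} (so that $2r\le 2^{-k_0}t<4r$) and set $A_k=\{y:2^{-k}t\le|x-y|<2^{-k+1}t\}$ for $k=1,\dots,k_0$. Since $2^{-k_0}t<4r<8r<t$, the union $\bigcup_{k=1}^{k_0}A_k=\{2^{-k_0}t\le|x-y|<t\}$ contains the region of integration, so the integral is dominated by $\sum_{k=1}^{k_0}\int_{A_k}$.

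On $A_k$ one has $|x-y|\ge 2^{-k}t$ and $A_k\subset B(x,2^{-k+1}t)$, hence
\[
\int_{A_k}\frac{|f(y)-f_B|}{|x-y|^{n-1}}\,dy
\le (2^{-k}t)^{-(n-1)}\int_{B(x,2^{-k+1}t)}|f(y)-f_B|\,dy .
\]
Writing the last integral as $|B(x,2^{-k+1}t)|$ times the average and using $|B(x,2^{-k+1}t)|=c_n(2^{-k+1}t)^n$, the radial factors combine into $(2^{-k}t)^{-(n-1)}(2^{-k+1}t)^n=2^n(2^{-k}t)$, so each shell contributes at most $c_n2^n(2^{-k}t)$ times the average of $|f-f_B|$ over $B(x,2^{-k+1}t)=B(x,2^{-(k-1)}t)$. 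As $k$ runs over $1,\dots,k_0$, the index $k-1$ stays in $[0,k_0]$, so Lemma~\ref{lem:BMO-decreasing} with $p=1$ applies to each such average and bounds it by $cr^\alpha\|f\|_{\mathcal{E}^{\alpha,1}}$, $c\log\frac{t}{r}\|f\|_{\mathcal{E}^{\alpha,1}}$, or $ct^\alpha\|f\|_{\mathcal{E}^{\alpha,1}}$ according as $\alpha<0$, $\alpha=0$, or $\alpha>0$.

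Finally I would sum over $k$. In every case the average bound is independent of $k$, so it factors out of the sum and leaves the geometric series $\sum_{k=1}^{k_0}2^{-k}t\le t$. This yields $ctr^\alpha\|f\|_{\mathcal{E}^{\alpha,1}}$, $ct\log\frac{t}{r}\|f\|_{\mathcal{E}^{\alpha,1}}$, and $ct\cdot t^\alpha\|f\|_{\mathcal{E}^{\alpha,1}}=ct^{1+\alpha}\|f\|_{\mathcal{E}^{\alpha,1}}$ respectively, which are precisely the three claimed estimates. I do not anticipate any genuine difficulty: the only delicate points are bookkeeping ones, namely checking that the chosen shells cover $\{8r\le|x-y|<t\}$ and that every index stays within the range $0\le k\le k_0$ where Lemma~\ref{lem:BMO-decreasing} is valid, together with the elementary remark that the convergence of $\sum 2^{-k}t$ to a constant multiple of $t$ is exactly what upgrades the pointwise bound $t^\alpha$ in the $\alpha>0$ case to the final power $t^{1+\alpha}$.
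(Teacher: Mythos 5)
Your proposal is correct and follows essentially the same route as the paper: decompose the annulus $\{8r\le|x-y|<t\}$ into dyadic shells down to scale $2^{-k_0}t\sim r$, bound $|x-y|^{-(n-1)}$ by its maximum on each shell so that each shell contributes $c\,2^{-k}t$ times a ball-average of $|f-f_B|$, control that average uniformly in $k$ by Lemma~\ref{lem:BMO-decreasing} with $p=1$, and sum the geometric series $\sum 2^{-k}t\le t$. The only difference from the paper's write-up is a harmless shift in the shell indexing.
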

\begin{proof}
Let $k_0\in\N$ satisfy $2r\le 2^{-k_0}t<4r$.
Then, using Lemma \ref{lem:BMO-decreasing}, we have
\begin{align*}
&\int_{8r\le|x-y|<t}\frac{|f(y)-f_B|}{|x-y|^{n-1}}\,dy
\\
&=\sum_{k=0}^{k_0-1}
\int_{2^{-k-1}t\le|x-y|<2^{-k}t}\frac{|f(y)-f_B|}{|x-y|^{n-1}}\,dy
\\
&\le\sum_{k=0}^{k_0-1}\frac{1}{(2^{-k-1}t)^{n-1}}
\int_{B(x,2^{-k}t)}{|f(y)-f_B|}\,dy
\\
&\le c\sum_{k=0}^{k_0-1}
\frac{(2^{-k}t)^{n}}{(2^{-k-1}t)^{n-1}}\frac{1}{|B(x,2^{-k}t)|}
\int_{B(x,2^{-k}t)}{|f(y)-f_B|}\,dy
\\
&\le c\Bigl(\sum_{k=0}^{k_0-1}2^{-k}\Bigr)t\frac{1}{|B(x,2^{-k}t)|}
\int_{B(x,2^{-k}t)}{|f(y)-f_B|}\,dy
\\
&\le
\begin{cases}
ctr^\alpha\|f\|_{\mathcal {E}^{\alpha,1}(\Rn)}, &\alpha<0,\\
ct\log\frac{t}{r}\|f\|_{\mathcal {E}^{\alpha,1}(\Rn)}, &\alpha=0,\\
ct^{1+\alpha}\|f\|_{\mathcal {E}^{\alpha,1}(\Rn)}, &\alpha>0.
\end{cases}
\end{align*}
\end{proof}
\begin{lemma}\label{lem:BMO-n-1+beta}
Let $B=B(x_0,r)$. Let $x\in B$, $t>8r$, $0<\beta<1$ and $\alpha\in\R$.
\begin{equation}
\int_{8r\le|x-y|<t}\frac{r^\beta|f(y)-f_B|}{|x-y|^{n-1+\beta}}\,dy
\le
\begin{cases}
cr^{\beta+\alpha}t^{1-\beta}\|f\|_{\mathcal {E}^{\alpha,1}(\Rn)}, &\alpha<0,\\
cr^{\beta}t^{1-\beta}\log\frac{t}{r}\|f\|_{\mathcal {E}^{\alpha,1}(\Rn)},
&\alpha=0,\\
cr^\beta t^{1-\beta+\alpha}\|f\|_{\mathcal {E}^{\alpha,1}(\Rn)}, &\alpha>0.
\end{cases}
\end{equation}
If $\beta=1$, we have
\begin{equation}
\int_{8r\le|x-y|<t}\frac{r|f(y)-f_B|}{|x-y|^{n}}\,dy
\le
\begin{cases}
cr^{1+\alpha}\log\frac{t}{r}\|f\|_{\mathcal {E}^{\alpha,1}(\Rn)}, &\alpha<0,\\
cr\log^2\frac{t}{r}\|f\|_{\mathcal {E}^{\alpha,1}(\Rn)},
&\alpha=0,\\
cr t^{\alpha}\log\frac{t}{r}\|f\|_{\mathcal {E}^{\alpha,1}(\Rn)}, &\alpha>0.
\end{cases}
\end{equation}
\end{lemma}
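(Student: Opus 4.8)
The plan is to follow the dyadic decomposition already used in the proof of Lemma \ref{lem:BMO-n-1}, the only new feature being the sharper decay $|x-y|^{-(n-1+\beta)}$ of the kernel and the extra factor $r^\beta$. First I would fix $k_0\in\N$ with $2r\le 2^{-k_0}t<4r$ and write the annulus $\{8r\le|x-y|<t\}$ as the disjoint union of the shells $\{2^{-k-1}t\le|x-y|<2^{-k}t\}$, $k=0,\dots,k_0-1$. On the $k$-th shell one has $|x-y|^{n-1+\beta}\ge(2^{-k-1}t)^{n-1+\beta}$, so that shell contributes at most
\[
\frac{r^\beta}{(2^{-k-1}t)^{n-1+\beta}}\int_{B(x,2^{-k}t)}|f(y)-f_B|\,dy
=c_n\,2^{n-1+\beta}\,r^\beta\,(2^{-k}t)^{1-\beta}\,\frac{1}{|B(x,2^{-k}t)|}\int_{B(x,2^{-k}t)}|f(y)-f_B|\,dy,
\]
where I have used $|B(x,2^{-k}t)|=c_n(2^{-k}t)^n$ and the elementary identity $(2^{-k}t)^n/(2^{-k-1}t)^{n-1+\beta}=2^{n-1+\beta}(2^{-k}t)^{1-\beta}$.

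Next I would invoke Lemma \ref{lem:BMO-decreasing} to control the average $\frac{1}{|B(x,2^{-k}t)|}\int_{B(x,2^{-k}t)}|f(y)-f_B|\,dy$. That lemma bounds this average by $cr^\alpha\|f\|$ when $\alpha<0$, by $c\log\frac{t}{r}\|f\|$ when $\alpha=0$, and by $ct^\alpha\|f\|$ when $\alpha>0$; in each case the bound is independent of $k$ (for $\alpha<0$ because the geometric sum $\sum_{j=k}^{k_0}2^{-j\alpha}$ is dominated by its top term $2^{-k_0\alpha}$, and $(2^{-k_0}t)^\alpha\approx r^\alpha$ since $2^{-k_0}t\approx r$; and for $\alpha>0$ because the sum is dominated by its $j=k$ term, which is at most $1$). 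Writing the resulting $k$-independent bound as $A_\alpha\|f\|_{\mathcal{E}^{\alpha,1}(\Rn)}$ and using $(2^{-k}t)^{1-\beta}=t^{1-\beta}2^{-k(1-\beta)}$, the $k$-th shell contributes at most $c\,r^\beta\,t^{1-\beta}\,2^{-k(1-\beta)}\,A_\alpha\|f\|_{\mathcal{E}^{\alpha,1}(\Rn)}$.

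The crux, and the only place where the hypothesis $\beta<1$ enters, is the summation $\sum_{k=0}^{k_0-1}2^{-k(1-\beta)}$. For $0<\beta<1$ this is a convergent geometric series bounded by $1/(1-2^{-(1-\beta)})$, so the whole integral is $\le c\,r^\beta t^{1-\beta}A_\alpha\|f\|_{\mathcal{E}^{\alpha,1}(\Rn)}$, which yields exactly the three stated bounds once $A_\alpha$ is substituted. When $\beta=1$ the exponent $1-\beta$ vanishes, every term of the sum equals $1$, and the sum collapses to $k_0\approx\log\frac{t}{r}$; this produces the extra logarithmic factor in the $\beta=1$ display, and multiplying by $A_\alpha$ (which itself carries a $\log\frac{t}{r}$ in the sub-case $\alpha=0$) gives the $\log^2\frac{t}{r}$ there. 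I expect no genuine obstacle: the argument is essentially bookkeeping, and the only points demanding care are the exact powers of $r$ and $t$ emerging from the ratio $(2^{-k}t)^n/(2^{-k-1}t)^{n-1+\beta}$ and the convergent-versus-logarithmic dichotomy of the geometric sum at $\beta<1$ versus $\beta=1$.
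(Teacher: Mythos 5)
Your proof is correct and follows essentially the same route as the paper's: the dyadic decomposition into shells $\{2^{-k-1}t\le|x-y|<2^{-k}t\}$, $k=0,\dots,k_0-1$ with $2r\le 2^{-k_0}t<4r$, the shell-by-shell lower bound on the kernel, Lemma \ref{lem:BMO-decreasing} to control the averages uniformly in $k$, and the geometric sum $\sum_{k}2^{-k(1-\beta)}$ (convergent for $0<\beta<1$, of size $k_0\approx\log\frac{t}{r}$ for $\beta=1$). In fact you write out the $\beta=1$ case explicitly, including the $\log^2\frac{t}{r}$ interaction with the $\alpha=0$ sub-case, whereas the paper's own proof only displays the $0<\beta<1$ computation and leaves $\beta=1$ implicit.
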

\begin{proof}
Let $k_0\in\N$ satisfy $2r\le 2^{-k_0}t<4r$.
Then, using Lemma \ref{lem:BMO-decreasing}, we have
\begin{align*}
{}&\int_{8r\le|x-y|<t}\frac{r^\beta |f(y)-f_B|}{|x-y|^{n-1+\beta}}\,dy
\\
&=r^\beta\sum_{k=0}^{k_0-1}
\int_{2^{-k-1}t\le|x-y|<2^{-k}t}\frac{|f(y)-f_B|}{|x-y|^{n-1+\beta}}\,dy
\\
&\le r^\beta\sum_{k=0}^{k_0-1}\frac{1}{(2^{-k-1}t)^{n-1+\beta}}
\int_{B(x,2^{-k}t)}{|f(y)-f_B|}\,dy
\\
&\le cr^\beta\sum_{k=0}^{k_0-1}
\frac{(2^{-k}t)^{n}}{(2^{-k-1}t)^{n-1+\beta}}\frac{1}{|B(x,2^{-k}t)|}
\int_{B(x,2^{-k}t)}{|f(y)-f_B|}\,dy
\\
&\le cr^\beta \Bigl(\sum_{k=0}^{k_0-1}
2^{-k(1-\beta)}\Bigr)t^{1-\beta}\frac{1}{|B(x,2^{-k}t)|}
\int_{B(x,2^{-k}t)}{|f(y)-f_B|}\,dy
\\
&\le
\begin{cases}
cr^{\beta+\alpha}t^{1-\beta}\|f\|_{\mathcal {E}^{\alpha,1}(\Rn)}, &\alpha<0,
\\
cr^\beta t^{1-\beta}\log\frac{t}{r}\|f\|_{\mathcal {E}^{\alpha,1}(\Rn)},
&\alpha=0,\\
cr^\beta t^{1-\beta+\alpha}\|f\|_{\mathcal {E}^{\alpha,1}(\Rn)}, &\alpha>0.
\end{cases}
\end{align*}
\end{proof}

\begin{lemma}\label{lem:BMO-n+beta}
Let $m\in\N$. Let $B=B(x_0,r)$. Let $x\in B$, $0<\beta\le 1$, $\alpha\le 1$,
$\gamma<\beta$, and $\alpha+\gamma<\beta$ in the case $0<\alpha\le1$.
Suppose furthermore that
 $\alpha=\alpha_1+\cdots+\alpha_m$ and $\alpha_1,\cdots,\alpha_m$ satisfiy
one of the following three conditions:
{\rm(i)} $\alpha_1,\alpha_2,\dots,\alpha_m<0$ provided $\alpha<0$;
{\rm(ii)} $\alpha_1=\alpha_2=\dots=\alpha_m=0$ provided $\alpha=0$;
{\rm(iii)} $\alpha_1,\alpha_2,\dots,\alpha_m>0$ provided $\alpha>0$.
Then
\begin{equation}
\int_{((B(x,8r)^m)^c}\frac{r^\beta\prod_{i=1}^m|f_i(y_i)-(f_i)_B|}
{(\sum_{j=1}^{m}|x-y_j|)^{mn+\beta-\gamma}}\,d\vec{y}
\le
cr^{\alpha+\gamma}\prod_{i=1}^m\|f_i\|_{\mathcal {E}^{\alpha_i,1}(\Rn)}.
\end{equation}
\end{lemma}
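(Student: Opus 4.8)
The plan is to exploit the product structure of the integrand together with a single dyadic decomposition of the integration region, thereby reducing everything to the one-variable estimate of Lemma~\ref{lem:BMO-increasing}. The starting observation is that on $((B(x,8r))^m)^c$ at least one coordinate satisfies $|x-y_j|>8r$, so $\sum_{j=1}^m|x-y_j|\ge\max_{1\le j\le m}|x-y_j|>8r$, while trivially $\sum_j|x-y_j|\le m\max_j|x-y_j|$; hence $\sum_j|x-y_j|$ and $\max_j|x-y_j|$ are comparable up to the fixed constant $m$. This is what allows me to replace the coupled denominator by a power of $\max_j|x-y_j|$ and to organize the whole region by a single scale.

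Concretely, I would set $A_k=\{\vec y:2^{k+3}r\le\max_{1\le j\le m}|x-y_j|<2^{k+4}r\}$ for $k\ge0$, so that $((B(x,8r))^m)^c=\bigcup_{k\ge0}A_k$ up to a null set. On $A_k$ one has $\sum_j|x-y_j|\ge 2^{k+3}r$, and at the same time every coordinate obeys $|x-y_j|<2^{k+4}r$, i.e.\ $A_k\subseteq (B(x,2^{k+4}r))^m$. Extracting the lower bound for the denominator and then enlarging the domain of the remaining nonnegative integrand to the full product ball decouples the integral:
\[
\int_{A_k}\frac{r^\beta\prod_{i=1}^m|f_i(y_i)-(f_i)_B|}{(\sum_j|x-y_j|)^{mn+\beta-\gamma}}\,d\vec y
\le \frac{r^\beta}{(2^{k+3}r)^{mn+\beta-\gamma}}\prod_{i=1}^m\int_{B(x,2^{k+4}r)}|f_i(y_i)-(f_i)_B|\,dy_i.
\]
Next I apply Lemma~\ref{lem:BMO-increasing} with $p=1$ to each factor. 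Using $|B(x,2^{k+4}r)|\simeq (2^kr)^n$, the product of the $m$ integrals is bounded by $c(2^kr)^{mn}$ times $\prod_i r^{\alpha_i}\|f_i\|=r^{\alpha}\prod_i\|f_i\|$ when every $\alpha_i<0$, times $\prod_i(2^kr)^{\alpha_i}\|f_i\|=(2^kr)^{\alpha}\prod_i\|f_i\|$ when every $\alpha_i>0$, and times a polynomial in $k$ (a factor of order $k^m$, coming from the $m$ logarithmic losses in the lemma) times $\prod_i\|f_i\|$ when every $\alpha_i=0$.

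Substituting these bounds and simplifying the powers of $2^kr$ via $mn-(mn+\beta-\gamma)=-(\beta-\gamma)$, the $k$-th term becomes $cr^{\alpha+\gamma}2^{-k(\beta-\gamma)}\prod_i\|f_i\|$ in the case $\alpha<0$, $cr^{\alpha+\gamma}2^{k(\alpha+\gamma-\beta)}\prod_i\|f_i\|$ in the case $\alpha>0$, and $cr^{\gamma}k^m2^{-k(\beta-\gamma)}\prod_i\|f_i\|$ in the case $\alpha=0$; summing over $k\ge0$ yields the asserted bound $cr^{\alpha+\gamma}\prod_i\|f_i\|_{\mathcal{E}^{\alpha_i,1}}$ in each regime. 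The only point where anything can fail is the convergence of these series, and this is precisely where the hypotheses enter: for $\alpha\le0$ the ratio $2^{-(\beta-\gamma)}<1$ because $\gamma<\beta$, and in the $\alpha=0$ case the polynomial factor $k^m$ is harmlessly absorbed by the geometric decay; for $\alpha>0$ convergence demands $\alpha+\gamma-\beta<0$, which is exactly the extra assumption $\alpha+\gamma<\beta$ imposed in the range $0<\alpha\le1$. Thus the main (and essentially only) difficulty is the bookkeeping of the three sign regimes so as to produce the uniform power $r^{\alpha+\gamma}$; once the decomposition by $\max_j|x-y_j|$ and the decoupling into a product are in place, Lemma~\ref{lem:BMO-increasing} does all of the substantive work.
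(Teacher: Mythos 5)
Your proof is correct and follows essentially the same route as the paper: your sets $A_k$ defined by $2^{k+3}r\le\max_j|x-y_j|<2^{k+4}r$ coincide (up to null sets) with the paper's dyadic pieces $(B(x,2^{k+4}r))^m\setminus(B(x,2^{k+3}r))^m$, and both arguments then bound the denominator below by $(2^{k+3}r)^{mn+\beta-\gamma}$, enlarge to the full product ball to decouple the integral, apply Lemma~\ref{lem:BMO-increasing} with $p=1$ factor by factor, and sum the resulting geometric (or $k^m$-weighted geometric) series using $\gamma<\beta$, respectively $\alpha+\gamma<\beta$. The bookkeeping of the three sign regimes, including the $r^{\alpha+\gamma}$ power and the $k^m$ logarithmic loss when $\alpha=0$, matches the paper's computation exactly.
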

\begin{proof}
Using Lemma \ref{lem:BMO-increasing}, we have
\begin{align*}
&\int_{((B(x,8r)^m)^c}\frac{r^\beta\prod_{i=1}^m|f_i(y_i)-(f_i)_B|}
{(\sum_{j=1}^{m}|x-y_j|)^{mn+\beta-\gamma}}\,d\vec{y}
\\
&= \sum_{k=0}^{\infty}
\int_{(B(x,2^{k+4}r))^m\setminus (B(x,2^{k+3}r))^m}
\frac{r^\beta\prod_{i=1}^m|f_i(y_i)-(f_i)_B|}
{(\sum_{j=1}^{m}|x-y_j|)^{mn+\beta-\gamma}}\,d\vec{y}
\\
&\le cr^\beta \sum_{k=0}^{\infty}\frac{1}{(2^{k+3}r)^{mn+\beta-\gamma}}
\int_{(B(x,2^{k+4}r))^m}\prod_{i=1}^m|f_i(y_i)-(f_i)_B|\,d\vec{y}
\\
&\le c r^\gamma\sum_{k=0}^{\infty}
\frac{1}{2^{k(\beta-\gamma)}}\prod_{i=1}^m\frac{1}{|B(x,2^{k+4}r)|}
\int_{B(x,2^{k+4}r)}|f_i(y_i)-(f_i)_B|\,dy_i
\\
&\le
\begin{cases}
c\sum_{k=0}^{\infty}
\frac{1}{2^{k(\beta-\gamma)}}r^{\alpha+\gamma}
\prod_{i=1}^m\|f_i\|_{\mathcal {E}^{\alpha_i,1}(\Rn)}, &\alpha<0,
\\
c\sum_{k=0}^{\infty}\frac{(k+4)^m}{2^{k(\beta-\gamma)}}r^\gamma
\prod_{i=1}^m\|f_i\|_{\mathcal {E}^{\alpha_i,1}(\Rn)}, &\alpha=0,
\\
c\sum_{k=0}^{\infty}\frac{2^{k\alpha}}{2^{k(\beta-\gamma)}}r^{\alpha+\gamma}
\prod_{i=1}^m\|f_i\|_{\mathcal {E}^{\alpha_i,1}(\Rn)}, &\alpha>0,
\end{cases}
\end{align*}
which implies the desired conclusion.
\end{proof}
\section{Proof of Theorem \ref{existence on BMO}}\label{proof 1.1}
\begin{proof}
It suffices to verify that for any $f_j\in BMO(\R^n)$, if there exists
$y_0\in\R^n$ such that $\mu(\vec{f})(y_0)<\infty$, then for any ball
$B\subset\R^n$, with $y_0\in B$,
$$
\frac{1}{|B|}\int_B |\mu(\vec{f})(x)-(\mu(\vec{f}))_B|\;dx
\leq C\prod_{j=1}^m\|f_j\|_{BMO}.
$$
For each fixed ball $B$ as above, let $r$ be its radius. Set
\begin{align}\label{<8r}
\mu^r(\vec{f})(x)=\left(\int_0^{8r} \left|\frac{1}{t^m}
\int_{(B(0,t))^m} \frac{\Omega(\vec{y})}{|\vec{y}|^{m(n-1)}}
\prod_{i=1}^m f_i(x-y_i)\; d\vec{y}\right|^2\;\frac{dt}{t}\right)^{1/2},
\end{align}
and
\begin{align}\label{>8r}
\mu^\infty(\vec{f})(x)=\left(\int_{8r}^\infty \left|\frac{1}{t^m}
\int_{(B(0,t))^m} \frac{\Omega(\vec{y})}{|\vec{y}|^{m(n-1)}}
\prod_{i=1}^m f_i(x-y_i)\; d\vec{y}\right|^2\;\frac{dt}{t}\right)^{1/2}.
\end{align}
By the vanishing condition (\ref{vanish}) of $\Omega$, we can see that for
any $x\in B$,
\begin{align*}
\mu^r(\vec{f})(x)=& \mu^r((f_1-(f_1)_B)\chi_{10B},\cdots,(f_m-(f_m)_B)
\chi_{10B})(x)
\\
\leq& \mu((f_1-(f_1)_B)\chi_{10B},\cdots,(f_m-(f_m)_B)\chi_{10B})(x).
\end{align*}
So, by the boundedness of $\mu$ in Theorem A, we have
$$
\int_B |\mu^r(\vec{f})(x)|^p \;dx
\leq C\prod_{j=1}^m \left(\int_{10B}|f_j(y_j)-(f_j)_B|^{p_j}\;dy_j
\right)^{\frac{p}{p_j}}\leq C|B|\prod_{j=1}^m\|f_j\|_{BMO}^p ,
$$
where $1<p_j<\infty,\,\frac{1}{p}=\frac{1}{p_1}+\cdots+\frac{1}{p_m}$. Then
$$
\frac{1}{|B|}\int_B |\mu^r(\vec{f})(x)| \;dx\leq C\prod_{j=1}^m\|f_j\|_{BMO}.
$$
Notice that,
\begin{align*}
\frac{1}{|B|}\int_B |\mu(\vec{f})(x)-(\mu(\vec{f}))_B|\;dx
\leq& 2\frac{1}{|B|}\int_B |\mu^r(\vec{f})(x)| \;dx
\\
&+2\frac{1}{|B|}\int_B |\mu^\infty(\vec{f})(x)-\inf_{y\in B}
\mu^\infty(\vec{f})(y)| \;dx.
\end{align*}
So we only need to show for any $x\in B$,
$$
|\mu^\infty(\vec{f})(x)-\inf_{y\in B}\mu^\infty(\vec{f})(y)|
\leq \sup_{y\in B}|\mu^\infty(\vec{f})(x)-\mu^\infty(\vec{f})(y)|
\leq C\prod_{j=1}^m\|f_j\|_{BMO}.
$$
Thus, to finish the proof of the theorem, it suffice to prove that for any
$x,z\in B$,
\begin{align}\label{destination}
|\mu^\infty(\vec{f})(x)-\mu^\infty(\vec{f})(z)|
\leq C\prod_{j=1}^m\|f_j\|_{BMO}.
\end{align}
It is easy to see that
\begin{align*}
|\mu^\infty(\vec{f})(x)-\mu^\infty(\vec{f})(z)|
&=\left|\left(\int_{8r}^\infty |F_t(\vec{f})(x)|^2\;\frac{dt}{t}
\right)^{\frac12}-\left(\int_{8r}^\infty |F_t(\vec{f})(z)|^2\;\frac{dt}{t}
\right)^{\frac12}\right|
\\
&\le \left(\int_{8r}^\infty |F_t(\vec{f})(x)
+F_t(\vec{f})(z)||F_t(\vec{f})(x)-F_t(\vec{f})(z)|\;\frac{dt}{t}
\right)^{\frac12}.
\end{align*}
Now the vanishing moment of $\Omega$ further tells us that for $z\in \R^n$
and $t_1,\dots,t_m>0$,
\begin{align}\label{estimate for F_t(f)}
&\left|\int_{\prod_{i=1}^m B(z,t_i)}
\frac{\Omega(z-y_1,\cdots,z-y_m)}{(\sum_{j=1}^m|z-y_j|)^{m(n-1)}}
\prod_{i=1}^m f_i(y_i)\; d\vec{y}\right|
\\
\leq& C\sum_{k=-\infty}^{-1} \frac{1}{\prod_{i=1}^m(2^{k+1}t_i)^{n-1}}
\int_{\prod_{i=1}^mB(z,2^{k+1}t_i)\setminus  \prod_{i=1}^mB(z,2^kt_i)}
\prod_{j=1}^m |f_j(y_j)-(f_j)_{B(z,2^{k+1}t)}|\;d\vec{y}\nonumber
\\
\leq& C\sum_{k=-\infty}^{-1} {\prod_{i=1}^m2^kt_i}
\prod_{j=1}^m\frac1{|B(z,2^{k+1}t_j)|}
\int_{B(z,2^{k+1}t_j)}
 |f_j(y_j)-(f_j)_{B(z,2^{k+1}t_i)}|\;dy_j\nonumber
\\
\leq& C \prod_{j=1}^mt_j \prod_{j=1}^m\|f_j\|_{BMO}.\nonumber
\end{align}
For $z\in\Rn$, $r>0$ and $t>8r$, $B(z,t)^m$ can be decomposed into the
following disjoint union
\begin{equation*}
B(z,t)^m=(B(z,t)\setminus B(z,8r))^m\cup (\cup_{i=1}^m B(z,t)^{i-1}\times
B(z,8r)\times B(z,t)^{m-i})\cup B(z,8r)^m.
\end{equation*}
So,
\begin{align}
&t^m F_t(\vec{f})(z)=\int_{B(z,t)^m}
\frac{\Omega(z-y_1,\cdots,z-y_m)}{(\sum_{j=1}^m|z-y_j|)^{m(n-1)}}
\prod_{i=1}^m f_i(y_i)\; d\vec{y}\label{eq:F_t-decomp}
\\
&=\int_{(B(z,t)\setminus B(z,8r))^m}
\frac{\Omega(z-y_1,\cdots,z-y_m)}{(\sum_{j=1}^m|z-y_j|)^{m(n-1)}}
\prod_{i=1}^m f_i(y_i)\; d\vec{y}\notag
\\
&+\sum_{\ell=1}^{m}\int_{B(z,t)^{\ell-1}\times B(z,8r)\times B(z,t)^{m-\ell}}
\frac{\Omega(z-y_1,\cdots,z-y_m)}{(\sum_{j=1}^m|z-y_j|)^{m(n-1)}}
\prod_{i=1}^m f_i(y_i)\; d\vec{y}\notag
\\
&-(m-1)\int_{B(z,8r)^m}
\frac{\Omega(z-y_1,\cdots,z-y_m)}{(\sum_{j=1}^m|z-y_j|)^{m(n-1)}}
\prod_{i=1}^m f_i(y_i)\; d\vec{y}.\notag
\end{align}
By \eqref{estimate for F_t(f)} we see that
\begin{align}
&t^m |F_t(\vec{f})(z)|\le Ct^m\prod_{j=1}^m\|f_j\|_{BMO},\label{eq:Ft-BMO}
\\
&\biggl|\int_{B(z,t)^{\ell-1}\times B(z,8r)\times B(z,t)^{m-\ell}}
\frac{\Omega(z-y_1,\cdots,z-y_m)}{(\sum_{j=1}^m|z-y_j|)^{m(n-1)}}
\prod_{i=1}^m f_i(y_i)\; d\vec{y}\biggr|\label{eq:B8r-i-BMO}
\\
&\hspace{6cm}\le Ct^{m-1}r\prod_{j=1}^m\|f_j\|_{BMO},\quad\ell=1,\dots,m,
\notag
\\
&\left|\int_{B(z,8r)^m}
\frac{\Omega(z-y_1,\cdots,z-y_m)}{(\sum_{j=1}^m|z-y_j|)^{m(n-1)}}
\prod_{i=1}^m f_i(y_i)\; d\vec{y}\right|
\le Cr^m \prod_{j=1}^m\|f_j\|_{BMO}.\label{eq:B8r-BMO}
\end{align}

For any fixed $x\in B$ and $t\geq 8r$, set
\begin{align}
&H_t(\vec{f})(x,z):=\biggl|
\int_{(B(z,t)\setminus B(z,8r))^m}
\frac{\Omega(z-y_1,\cdots,z-y_m)}{(\sum_{j=1}^m|z-y_j|)^{m(n-1)}}
\prod_{i=1}^m f_i(y_i)\; d\vec{y}\label{eq:Ht-f}
\\
&\hspace{4cm}-
\int_{(B(x,t)\setminus B(x,8r))^m}
\frac{\Omega(x-y_1,\cdots,x-y_m)}{(\sum_{j=1}^m|x-y_j|)^{m(n-1)}}
\prod_{i=1}^m f_i(y_i)\; d\vec{y}\biggr|.\notag
\end{align}
Then by \eqref{eq:F_t-decomp}, \eqref{eq:Ft-BMO}, \eqref{eq:B8r-i-BMO},
\eqref{eq:B8r-BMO} and \eqref{eq:Ht-f} we have for any $x,z\in B$,
\begin{align}
&|\mu^\infty(\vec{f})(x)-\mu^\infty(\vec{f})(z)|\label{eq:bound-by-Ht}
\\
&\leq C\left(
\int_{8r}^\infty |F_t(\vec{f})(x)-F_t(\vec{f})(z)|\;\frac{dt}{t}\right)^{1/2}
\prod_{j=1}^m\|f_j\|_{BMO}^{\frac{1}{2}}\notag
\\
&\leq C\prod_{j=1}^m\|f_j\|_{BMO}^{\frac{1}{2}}\left(
\int_{8r}^\infty (t^{m-1}r+r^m)\;\frac{dt}{t^{m+1}}\right)^{1/2}\notag
\\
&\hspace{3cm}
+C\prod_{j=1}^m\|f_j\|_{BMO}^{\frac{1}{2}}\left(
\int_{8r}^\infty |H_t(\vec{f})(x,z)|\;\frac{dt}{t^{m+1}}\right)^{1/2}\notag
\\
&\leq C\big[\prod_{j=1}^m\|f_j\|_{BMO}
+\prod_{j=1}^m\|f_j\|_{BMO}^{\frac{1}{2}}\left(
\int_{8r}^\infty |H_t(\vec{f})(x,z)|\;\frac{dt}{t^{m+1}}\right)^{1/2}.
\notag
\end{align}
Therefore, the proof of inequality (\ref{destination}) can be reduced to
proving that for any $x,z\in B$,
\begin{equation}\label{eq:purpose-1}
\int_{8r}^\infty |H_t(\vec{f})(x,z)|\;\frac{dt}{t^{m+1}}
\leq C\prod_{j=1}^m\|f_j\|_{BMO}.
\end{equation}
To show this we first introduce some notations. We fix $x$ and $z$,
and for $t>0$ write
$$\Xi(x,t)=\{y\in \R^n: 8r\leq |x-y|<t,\,8r\leq |z-y|<t\};$$
$$\Xi(z,t)=\{y\in \R^n: 8r\leq |z-y|<t,\,8r\leq |x-y|<t\};$$
$$\Gamma(x,t)=\{y\in \R^n: 8r\leq |x-y|<t,\,|z-y|\geq t\};$$
$$\Gamma(z,t)=\{y\in \R^n: 8r\leq |z-y|<t,\,|x-y|\geq t\};$$
$$\Lambda(x,t)=\{y\in \R^n: 8r\leq |x-y|<t,\,|z-y|< 8r\};$$
$$\Lambda(z,t)=\{y\in \R^n: 8r\leq |z-y|<t,\,|x-y|< 8r\};$$
$$\vec{\Theta}(x,t)=\Theta_1(x,t)\times\cdots\times \Theta_m(x,t),\,
\Theta_i(x,t)\in\{\Xi(x,t),\,\Gamma(x,t),\,\Lambda(x,t)\};$$
$$\vec{\Theta}(z,t)=\Theta_1(z,t)\times\cdots\times \Theta_m(z,t),\,
\Theta_i(z,t)\in\{\Xi(z,t),\,\Gamma(z,t),\,\Lambda(z,t)\}.$$

For any $y$, denote
$$\Xi(x,y)=\{t>0: 8r\leq |x-y|<t,\, 8r\leq |z-y|<t\};$$
$$\Xi(z,y)=\{t>0: 8r\leq |z-y|<t,\, 8r\leq |x-y|<t\};$$
$$\Gamma(x,y)=\{t>0: 8r\leq |x-y|<t,\,|z-y|\geq t\};$$
$$\Gamma(z,y)=\{t>0: 8r\leq |z-y|<t,\,|x-y|\geq t\};$$
$$\Lambda(x,y)=\{t>0: 8r\leq |x-y|<t,\,|z-y|< 8r\};$$
$$\Lambda(z,y)=\{t>0: 8r\leq |z-y|<t,\,|x-y|< 8r\};$$

$$\Lambda_i(x,y_i)\in\{\Gamma(x,y_i),\,\Xi(x,y_i)\},\,i=1,\cdots,m;$$
$$\Lambda_i(z,y_i)\in\{\Gamma(z,y_i),\,\Xi(z,y_i)\},\,i=1,\cdots,m.$$

Moreover, some immediate consequences are
$$B(x,t)\backslash B(x,8r)=\Xi(x,t)\cup\Gamma(x,t)\cup\Lambda(x,t),$$
$$B(z,t)\backslash B(z,8r)=\Xi(z,t)\cup\Gamma(z,t)\cup\Lambda(z,t)$$
and $$\Xi(x,t)=\Xi(z,t)=:\Xi(t),\,\Xi(x,y)=\Xi(z,y)=:\Xi(y).$$
Using these notations, we have
\begin{align*}
{}&H_t(\vec{f})(x,z)
\\ 
&=\biggl|\int_{(B(x,t)\backslash B(x,8r))^m}
\frac{\Omega(x-y_1,\cdots,x-y_m)}{(\sum_{j=1}^m|x-y_j|)^{m(n-1)}}
\prod_{i=1}^m f_i(y_i)\; d\vec{y}
\\ 
&\quad-\int_{(B(z,t)\backslash B(z,8r))^m}
\frac{\Omega(z-y_1,\cdots,z-y_m)}{(\sum_{j=1}^m|z-y_j|)^{m(n-1)}}
\prod_{i=1}^m f_i(y_i)\; d\vec{y}\biggr|
\\ 
&
\le \int_{(\Xi(t))^m}\left|
\frac{\Omega(x-y_1,\cdots,x-y_m)}{(\sum_{j=1}^m|x-y_j|)^{m(n-1)}}
-\frac{\Omega(z-y_1,\cdots,z-y_m)}{(\sum_{j=1}^m|z-y_j|)^{m(n-1)}}\right|
\\ 
&\hspace{8cm}\times\prod_{i=1}^m |f_i(y_i)-(f_i)_B|\; d\vec{y}
\\ 
&\quad+\int_{(\Lambda(x,t))^m}
\frac{|\Omega(x-y_1,\cdots,x-y_m)|}{(\sum_{j=1}^m|x-y_j|)^{m(n-1)}}
\prod_{i=1}^m |f_i(y_i)-(f_i)_B|\; d\vec{y}
\\ 
&\quad+\int_{(\Lambda(z,t))^m}
\frac{|\Omega(z-y_1,\cdots,z-y_m)|}{(\sum_{j=1}^m|z-y_j|)^{m(n-1)}}
\prod_{i=1}^m |f_i(y_i)-(f_i)_B|\; d\vec{y}
\\ 
&\quad+\int_{\substack{
\vec{\Theta}(x,t)
\\ 
\exists \Theta_i(x,t)=\Gamma(x,t)}}
 \frac{|\Omega(x-y_1,\cdots,x-y_m)|}{(\sum_{j=1}^m|x-y_j|)^{m(n-1)}}
\prod_{i=1}^m |f_i(y_i)-(f_i)_B|\; d\vec{y}
\\ 
&\quad+\int_{\substack{
\vec{\Theta}(z,t)\\
\exists \Theta_i(z,t)=\Gamma(z,t)}}
\frac{|\Omega(z-y_1,\cdots,z-y_m)|}{(\sum_{j=1}^m|z-y_j|)^{m(n-1)}}
\prod_{i=1}^m |f_i(y_i)-(f_i)_B|\; d\vec{y}
\\ 
&\quad+\sum_{l=1}^{m-1} \int_{(\Xi(x,t))^l}
\int_{(\Lambda(x,t))^{m-l}}
\frac{|\Omega(x-y_1,\cdots,x-y_m)|}{(\sum_{j=1}^m|x-y_j|)^{m(n-1)}}
\prod_{i=1}^m |f_i(y_i)-(f_i)_B|\; d\vec{y}
\\
&\quad+\sum_{l=1}^{m-1} \int_{(\Xi(z,t))^l} \int_{(\Lambda(z,t))^{m-l}}
\frac{|\Omega(z-y_1,\cdots,z-y_m)|}{(\sum_{j=1}^m|z-y_j|)^{m(n-1)}}
\prod_{i=1}^m |f_i(y_i)-(f_i)_B|\; d\vec{y}
\\ 
&
=:\sum_{i=1}^5 H_{t,i}(\vec{f})(x,z)
+\sum_{l=1}^{m-1} H_{t,6}^l(\vec{f})(x,z)
+\sum_{l=1}^{m-1} H_{t,7}^l(\vec{f})(x,z) .
\end{align*}
In the above, we did not explicitly write all the permutated terms
for the sake of simplicity.

For $x,z\in B$, we have by Lemma \ref{lem:BMO-n-1}
\begin{align*}
|H_{t,2}(\vec{f})(x,z)|\leq &C\int_{(B(x,10r)\backslash B(x,8r))^m}
\frac{\prod_{i=1}^m |f_i(y_i)-(f_i)_B|}{(\sum_{j=1}^m|x-y_j|)^{m(n-1)}}
\;d\vec{y}
\\
\leq& C \prod_{i=1}^m \int_{8r\le|x-y_i|\le10r}
\frac{|f_i(y_i)-(f_i)_B|}{|x-y_i|^{n-1}} \;dy_i
\\
\leq& Cr^m \prod_{j=1}^m\|f_j\|_{BMO},
\end{align*}
which leads to
$$
\int_{8r}^\infty |H_{t,2}(\vec{f})(x,z)|\;\frac{dt}{t^{m+1}}
\leq C \prod_{j=1}^m\|f_j\|_{BMO},
$$
and similarly,
$$
\int_{8r}^\infty |H_{t,3}(\vec{f})(x,z)|\;\frac{dt}{t^{m+1}}
\leq C \prod_{j=1}^m\|f_j\|_{BMO}.
$$
For $H_{t,4}(\vec{f})$, note that for any $x,z\in B$ the length of
$t$ can be controlled by
$$
|\cap_{j=1}^m \Theta_j(x,y_j)|\leq |\Theta_i(x,y_i)|=|\Gamma(x,y_i)|
\leq ||z-y_i|-|x-y_i||\leq |z-x|\leq 2r,
$$
and for any $t\in \cap_{j=1}^m \Theta_j(x,y_j)$,
$t>\frac{1}{m}(\sum_{j=1}^m|x-y_j|)$.
Then we can obtain the following estimate by using Lemma \ref{lem:BMO-n+beta}
\begin{align}\label{previousH_{t,4}}
&\int_{8r}^\infty |H_{t,4}(\vec{f})(x,z)|\;\frac{dt}{t^{m+1}}
\\
&\leq C\int_{((B(x,8r))^c)^m}
\frac{\prod_{i=1}^m |f_i(y_i)-(f_i)_B|}{(\sum_{j=1}^m|x-y_j|)^{m(n-1)}}
\int_{\bigcap_{j=1}^m \Theta_j(x,y_j)} \frac{dt}{t^{m+1}}\;d\vec{y}\notag
\\
&
\leq C r \int_{((B(x,8r))^c)^m}
\frac{\prod_{i=1}^m |f_i(y_i)-(f_i)_B|}{(\sum_{j=1}^m|x-y_j|)^{mn+1}}
\;d\vec{y}\nonumber
\\
&\le C \prod_{j=1}^m\|f_j\|_{BMO},\nonumber
\end{align}
For $H_{t,5}(\vec{f})$, similarly, we have
\begin{align}\label{H_{t,5}}
\int_{8r}^\infty |H_{t,5}(\vec{f})(x,z)|\;\frac{dt}{t^{m+1}}
\leq C \prod_{j=1}^m\|f_j\|_{BMO},
\end{align}
For $H_{t,6}^l(\vec{f})$, we may assume $y_1,\cdots,y_l\in \Xi(t)$ and
$y_{l+1},\cdots,y_m\in \Lambda(x,t)$. Then by the simple calculation and by
Lemma \ref{lem:BMO-n-1}, we have
\begin{align}
&H_{t,6}^l(\vec{f})(x,z)\label{H_{t,6}^l-0}
\\
=&\int_{(\Xi(x,t))^l} \int_{(\Lambda(x,t))^{m-l}}
\frac{|\Omega(x-y_1,\cdots,x-y_m)|}{(\sum_{j=1}^m|x-y_j|)^{m(n-1)}}
\prod_{i=1}^m |f_i(y_i)-(f_i)_B|\; d\vec{y} \notag
\\
\leq &C \prod_{j=1}^l
 \int_{8r\leq |x-y_j|<t}
\frac{|f_j(y_j)-(f_j)_B|}{|x-y_j|^{n-1}}\;dy_j
\prod_{j=l+1}^m \int_{8r\leq |x-y_j|
 \leq 10r} \frac{|f_j(y_j)-(f_j)_B|}{|x-y_j|^{n-1}}\;dy_j \notag
\\
\leq& C\Bigl(t\log_2\frac{t}{r}\Bigr)^l r^{m-l} \prod_{j=1}^m\|f_j\|_{BMO}.
 \notag
\end{align}
So, we obtain
\begin{align}\label{H_{t,6}^l}
\int_{8r}^\infty |H_{t,6}^l(\vec{f})(x,z)|\;\frac{dt}{t^{m+1}}
&\leq C  \int_{8r}^\infty r^{m-l}\Bigl(t\log_2\frac{t}{r}\Bigr)^l
\;\frac{dt}{t^{m+1}}
\\
&\le C  \int_{8}^\infty\frac{(\log_2s)^l}{s^{m-l+1}}\,ds
\leq C\prod_{j=1}^m\|f_j\|_{BMO}.\notag
\end{align}
Similar estimate holds for $H_{t,7}^l(\vec{f})$.

It remains to estimate $H_{t,1}(\vec{f})$. We employ the Lipschitz continuous
condition (ii) of $\Omega$ and Lemma \ref{lem:BMO-n+beta}
to get the following.
\begin{align}\label{H_{t,1}}
&\int_{8r}^\infty |H_{t,1}(\vec{f})(x,z)|\;\frac{dt}{t^{m+1}}
\\
\leq& C\int_{((B(x,8r))^c)^m}
\frac{|x-z|^\alpha}{(\sum_{j=1}^m|x-y_j|)^{m(n-1)+\alpha}}
\prod_{i=1}^m |f_i(y_i)-(f_i)_B|
\int_{\frac{1}{m}(\sum_{j=1}^m|x-y_j|)}^\infty \frac{dt}{t^{m+1}}\;d\vec{y}
\nonumber
\\
\leq& C\int_{((B(x,8r))^c)^m}
\frac{r^\alpha}{(\sum_{j=1}^m|x-y_j|)^{mn+\alpha}}
\prod_{i=1}^m |f_i(y_i)-(f_i)_B|\;d\vec{y}.\nonumber
\\
\le& C\prod_{j=1}^m\|f_j\|_{BMO}.\notag
\end{align}
Thus, we have proved (\ref{destination}). This completes the proof of
Theorem \ref{existence on BMO}.

\end{proof}
\section{Proofs of Theorems \ref{existence on Campanato space}-\ref{existence on Lipschitz space}}\label{Sec-4}

\begin{proof}[Proof of Theorem 1.2]
Similarly to the proof of Theorem \ref{existence on BMO}, to prove
Theorem \ref{existence on Campanato space}, it suffices to show that for any
$f_j\in\mathcal{E}^{\alpha_j,p_j}(\R^n)$
with $\|f_j\|_{\mathcal{E}^{\alpha_j,p_j}}=1$, if there exists $y_0\in\R^n$
such that $\mu(\vec{f})(y_0))<\infty$, then for any ball $B\subset\R^n$
with $B\ni y_0$,
$$
\left(\frac{1}{|B|}\int_B |\mu(\vec{f})(x)-\inf_{y\in B}\mu(\vec{f})(y)|^p\;dx
\right)^{1/p} \leq C|B|^{\alpha/n}.
$$
Let $r$ be the radius of $B$, $\mu^r(\vec{f})$ and $\mu^\infty(\vec{f})$ be
the same as in (\ref{<8r}) and (\ref{>8r}), respectively. Since,
\begin{align*}
|\mu(\vec{f})(x)-\inf_{y\in B}\mu(\vec{f})(y)|
&\le \mu(\vec{f})(x)-\inf_{y\in B}\mu^\infty(\vec{f})(y)
\\
&\leq |\mu^r(\vec{f})(x)|+\sup_{y\in B}|\mu^\infty(\vec{f})(x)-\mu^\infty
(\vec{f})(y)|,
\end{align*}

by the vanishing moment of $\Omega$, we can write
\begin{align*}
&\left(\frac{1}{|B|}\int_B |\mu(\vec{f})(x)-\inf_{y\in B}\mu(\vec{f})(y)|^p\;dx
\right)^{1/p}
\\
\leq& \left(\frac{1}{|B|}\int_B |\mu^r(\vec{f})(x)|^p\;dx\right)^{1/p}
+\left(\frac{1}{|B|}\int_B \sup_{y\in B}|\mu^\infty(\vec{f})(x)-\mu^\infty
(\vec{f})(y)|^p\;dx\right)^{1/p}
\\
\leq& \left(\frac{1}{|B|}\int_B |\mu^r((f_1-(f_1)_{10B})
\chi_{10B},\cdots,(f_m-(f_m)_{10B})\chi_{10B})(x)|^p\;dx\right)^{1/p}
\\
&+\left(\frac{1}{|B|}\int_B \sup_{y\in B}
|\mu^\infty(\vec{f})(x)-\mu^\infty(\vec{f})(y)|^p\;dx\right)^{1/p}
\\
=&:I+II.
\end{align*}
By Theorem A, we can write
$$
I\leq C \frac{1}{|B|^{1/p}} \left(\prod_{j=1}^m
\int_{10B}|f_j(y_j)-(f_j)_{10B}|^{p_j}\right)^{1/p_j}\leq C|B|^{\alpha/n}.
$$
Thus, the proof of Theorem \ref{existence on Campanato space} is now reduced
to prove that for any $x,z\in B$,
\begin{align}\label{destination2}
|\mu^\infty(\vec{f})(x)-\mu^\infty(\vec{f})(z)|\leq C r^\alpha.
\end{align}
If $-m<\alpha<\infty$, $p\in(1,\infty)$ a standard computation gives us that
for any $z\in B$ and $t_1,\dots, t_m>0$,
\begin{align}\label{estimate for F_t(f)2}
&\left|\int_{\prod_{i=1}^m(B(z,t_i)}
\frac{\Omega(z-y_1,\cdots,z-y_m)}{(\sum_{j=1}^m|z-y_j|)^{m(n-1)}}
\prod_{i=1}^m f_i(y_i)\; d\vec{y}\right|
\\
\leq& C\sum_{k=-\infty}^{0} \frac{1}{\prod_{i=1}^m(2^kt_i)^{n-1}}
\int_{\prod_{i=1}^mB(z,2^kt_i)\setminus  \prod_{i=1}^mB(z,2^{k-1}t_i)}
\prod_{j=1}^m |f_j(y_j)-(f_j)_{B(z,2^kt_j)}|\;d\vec{y}\nonumber
\\
\leq& C\sum_{k=-\infty}^{0} \prod_{i=1}^m(2^kt_i)
\left(\frac{1}{\prod_{i=1}^m|B(z,2^kt_i)|}\int_{\prod_{i=1}^mB(z,2^kt_i)}
\prod_{j=1}^m |f_j(y_j)-(f_j)_{B(z,2^kt_j)}|^p\;d\vec{y}
\right)^{1/p}\nonumber
\\
\leq& C\prod_{j=1}^mt_j \sum_{k=-\infty}^{0} 2^{km}
\prod_{j=1}^m \left(\frac{1}{|B(z,2^kt_j)|^m}\int_{(B(z,2^kt))^m}
|f_j(y_j)-(f_j)_{B(z,2^kt_j)}|^{p_j}\;d\vec{y}\right)^{1/p_j}\nonumber
\\
\leq& C\prod_{j=1}^mt_j \sum_{k=-\infty}^{0} 2^{km}
\prod_{j=1}^m \left(\frac{1}{|B(z,2^kt_j)|}\int_{(B(z,2^kt_j))}
|f_j(y_j)-(f_j)_{B(z,2^kt_j)}|^{p_j}\;dy_j\right)^{1/p_j}\nonumber
\\
\leq& C \prod_{j=1}^mt_j^{1+\alpha_j} .\nonumber
\end{align}
(We shall use this fact to prove Theorem \ref{existence on Lipschitz space}
for $0<\alpha\le1$.)

On the other hand, if $p\in(n,\infty)$ and $\alpha\in(-\infty,0)$,
it follows from H\"{o}lder's inequality that
\begin{align}\label{estimate for F_t(f)3}
&\left|\int_{\prod_{i=1}^m B(z,t_i)}
\frac{\Omega(z-y_1,\cdots,z-y_m)}{(\sum_{j=1}^m|z-y_j|)^{m(n-1)}}
\prod_{i=1}^m f_i(y_i)\; d\vec{y}\right|
\\
\leq& C\left(\int_{\prod_{i=1}^m B(z,t_i)}
\prod_{j=1}^m |f_j(y_j)-(f_j)_{B(z,t_j)}|^p\;d\vec{y}\right)^{1/p} \notag
\\
&\hspace{3cm}\times
\left(\int_{\prod_{i=1}^m B(z,t_i)}
\frac{d\vec{y}}{(\sum_{j=1}^m|z-y_j|)^{m(n-1)p^\prime}}
\right)^{1/p^\prime}\nonumber
\\
\leq& C\prod_{j=1}^mt_j^{\frac{n}{p^\prime}-n+1}
\prod_{j=1}^m\left(\int_{\prod_{i=1}^m B(z,t_i)}
|f_j(y_j)-(f_j)_{B(z,t_j)}|^{p_j}\;d\vec{y}
\right)^{1/p_j}\nonumber
\\
\leq& C\prod_{j=1}^mt_j^{\frac{n}{p^\prime}-n+1}
\prod_{j=1}^m (\prod_{i\ne j}t_i^{n/p_j} t_j^{\alpha_j+n/p_j})
=C \prod_{j=1}^mt_j^{\frac{n}{p^\prime}-n+1}
\prod_{j=1}^m t_j^{n/p} t_j^{\alpha_j+n/p_j})\nonumber
\\
=&C\prod_{j=1}^mt_j^{1+\alpha_j}.\nonumber
\end{align}
Hence for any $x,z\in B$, according to (\ref{estimate for F_t(f)2}) and
(\ref{estimate for F_t(f)3}),
 we see that
\begin{align}
&t^m |F_t(\vec{f})(z)|\le Ct^{m+\alpha}
\prod_{j=1}^m\|f_j\|_{\mathcal{E}^{\alpha_j,p_j}},\label{eq:Ft-Campanato}
\\
&\biggl|\int_{B(z,t)^{\ell-1}\times B(z,8r)\times B(z,t)^{m-\ell}}
\frac{\Omega(z-y_1,\cdots,z-y_m)}{(\sum_{j=1}^m|z-y_j|)^{m(n-1)}}
\prod_{i=1}^m f_i(y_i)\; d\vec{y}\biggr|\label{eq:B8r-i-Campanato}
\\
&\hspace{4cm}\le Ct^{m-1+\alpha-\alpha_\ell}r^{1+\alpha_\ell}
\prod_{j=1}^m\|f_j\|_{\mathcal{E}^{\alpha_j,p_j}},
\quad\ell=1,\dots,m,\notag
\\
&\left|\int_{B(z,8r)^m}
\frac{\Omega(z-y_1,\cdots,z-y_m)}{(\sum_{j=1}^m|z-y_j|)^{m(n-1)}}
\prod_{i=1}^m f_i(y_i)\; d\vec{y}\right|
\le Cr^{m+\alpha} \prod_{j=1}^m\|f_j\|_{\mathcal{E}^{\alpha_j,p_j}}.
\label{eq:B8r-Campanato}
\end{align}
Therefore by \eqref{eq:F_t-decomp}, \eqref{eq:Ft-Campanato},
\eqref{eq:B8r-i-Campanato}, \eqref{eq:B8r-Campanato} and \eqref{eq:Ht-f}
we have for any $x,z\in B$,
\begin{align*}
{}&|\mu^\infty(\vec{f})(x)-\mu^\infty(\vec{f})(z)|
\\
&\le C\left(\int_{8r}^\infty |F_t(\vec{f})(x)-F_t(\vec{f})(z)|\;
\frac{dt}{t^{1-\alpha}}\right)^{\frac12}
\\
&\le C\left(\int_{8r}^\infty \sum_{j=1}^{m}t^{m-1+\alpha-\alpha_j}
r^{1+\alpha_j}+r^{m+\alpha}\;\frac{dt}{t^{m+1-\alpha}}\right)^{\frac12}
+\left(\int_{8r}^\infty |H_t(\vec{f})(x,z)|\;\frac{dt}{t^{m+1-\alpha}}
\right)^{\frac12}
\\
&\le C r^\alpha+\left(\int_{8r}^\infty |H_t(\vec{f})(x,z)|\;
\frac{dt}{t^{m+1-\alpha}}\right)^{\frac12},
\end{align*}
where $H_t(\vec{f})(x,z)$ is the same as in the proof of Theorem
\ref{existence on BMO}. Again decompose $H_t(\vec{f})(x,z)$ into
$$
H_t(\vec{f})(x,z)\leq \sum_{i=1}^5 H_{t,i}(\vec{f})(x,z)
+\sum_{l=1}^{m-1} H_{t,6}^l(\vec{f})(x,z)
+\sum_{l=1}^{m-1} H_{t,7}^l(\vec{f})(x,z).
$$

Applying H\"{o}lder's inequality and the Lipschitz continuous condition of
$\Omega$, we obtain by Lemma \ref{lem:BMO-n+beta} that for any $x,z\in B$,
\begin{align*}
&\int_{8r}^\infty |H_{t,1}(\vec{f})(x,z)|\;\frac{dt}{t^{m+1-\alpha}}
\\
&\leq C\int_{((B(x,8r))^c)^m}
\frac{|x-z|^\beta}{(\sum_{j=1}^m|x-y_j|)^{m(n-1)+\beta}}
\prod_{i=1}^m |f_i(y_i)-(f_i)_B|
\int_{\frac{1}{m}(\sum_{j=1}^m|x-y_j|)}^\infty \frac{dt}{t^{m+1-\alpha}}\;
d\vec{y}\nonumber
\\
&\leq C\int_{((B(x,8r))^c)^m}
\frac{r^{\beta}|f_i(y_i)-(f_i)_B|}{(\sum_{j=1}^m|x-y_j|)^{mn+\beta-\alpha}}
 \;d\vec{y}
\\
&\le Cr^{2\alpha}.
\end{align*}
On the other hand, by Lemma \ref{lem:BMO-n-1} we get
\begin{equation*}
|H_{t,2}(\vec{f})(x,z)|\le C\prod_{i=1}^m \int_{8r\le|x-y_i|\le10r}
\frac{|f_i(y_i)-(f_i)_B|}{|x-y_i|^{n-1}} \;d{yi}
\le Cr^{m+\alpha}.
\end{equation*}
Hence
\begin{align*}
{}\int_{8r}^\infty |H_{t,2}(\vec{f})(x,z)|\;\frac{dt}{t^{m+1-\alpha}}
\leq C\int_{8r}^\infty \frac{dt}{t^{m+1-\alpha}}  r^{m+\alpha}
\leq C r^{2\alpha}.
\end{align*}
Similarly we get
\begin{equation*}
\int_{8r}^\infty |H_{t,3}(\vec{f})(x,z)|\;\frac{dt}{t^{m+1-\alpha}}
\le C r^{2\alpha}.
\end{equation*}
Like as in getting \eqref{previousH_{t,4}} in the proof of Theorem
\ref{existence on BMO}. we obtain by using Lemma \ref{lem:BMO-n+beta}
\begin{align*}
{}\int_{8r}^\infty |H_{t,4}(\vec{f})(x,z)|\;\frac{dt}{t^{m+1-\alpha}}
&\le Cr\int_{((B(x,8r))^c)^m}
\frac{\prod_{i=1}^m |f_i(y_i)-(f_i)_B|}{(\sum_{j=1}^m|x-y_j|)^{mn+1-\alpha}}\;
d\vec{y}
\\
&\le Cr^{2\alpha}.
\end{align*}
Similar estimate holds for $H_{t,5}(\vec{f})(x,z)$.

As for $H_{t,6}^l(\vec{f})(x,z)$ and $H_{t,7}^l(\vec{f})(x,z)$,
 similarly by using Lemma \ref{lem:BMO-n-1} we have
\begin{align*}
&\int_{8r}^\infty |H_{t,6}^l(\vec{f})(x,z)|\;\frac{dt}{t^{m+1-\alpha}}
+ \int_{8r}^\infty |H_{t,7}^l(\vec{f})(x,z)|\;\frac{dt}{t^{m+1-\alpha}}
\\
&\leq C r^{m-l+\alpha} \int_{8r}^\infty t^{l}\;\frac{dt}{t^{m+1-\alpha}}
\leq Cr^{2\alpha}
\end{align*}
Combine the estimates for
$H_{t,i}(\vec{f})(x,z) (1\leq i\leq 5),\,H_{t,6}^l(\vec{f})$ and
$H_{t,7}^l(\vec{f})$, we obtain that for any $x,z\in B$, (\ref{destination2})
holds. We complete the proof of Theorem \ref{existence on Campanato space}.
\end{proof}

\begin{proof}[Proof of Theorem 1.3] 
Using \eqref{estimate for F_t(f)2}, we can prove
Theorem \ref{existence on Lipschitz space} similarly to the proof of
Theorem \ref{existence on Campanato space}. The details are omitted.
\end{proof}

\section{Extension to operators with separated kernels}\label{Sec-5}

\begin{definition}\label{multilinear Marcinkiewicz integral with separated
kernels} (multilinear Marcinkiewicz integral with separated kernels).\\
Let $\Omega=\prod_{j=1}^m\Omega_j$ be a function defined on $(\R^n)^m$
with the following properties:

(i) For $j=1,\cdots,m$, $\Omega_j$ is homogeneous of degree $0$ on $\Rn$,
i.e. for any $\lambda>0$ and $y\in \R^n$,
\begin{align}\label{homogeneous1}
\Omega_j(\lambda y)=\Omega(y);
\end{align}

(ii) $\Omega_j$ is Lipschitz continuous on $S^{n-1}$, i.e. there is
$0<\alpha<1$ and $C>0$ such that for any $\xi,\,\eta\in \R^n$
\begin{align}\label{Lipschitz}
|\Omega_j(\xi)-\Omega_j(\eta)|\leq C|\xi^\prime-\eta^\prime|^\alpha,\quad
 j=1,\cdots,m.
\end{align}
where $y^\prime=\frac{y}{|y|}$;

(iii) The integration of $\Omega_j$ on each unit sphere vanishes,
\begin{align}\label{vanishing1}
\int_{S^{n-1} }\Omega_j(y) \; dy=0,\quad j=1,\cdots,m.
\end{align}
For any $\vec{f}=(f_1,\cdots,f_m)\in S\times \cdots \times S$,
we define the operator $F_t$ for any $t>0$ as
\begin{align}\label{F_t-1}
F_t(\vec{f})(x)
&=\frac{\chi_{(B(0,t))^m} \Omega({\cdot})}{t^m\prod_{j=1}^m|{\cdot_j}|^{n-1}}
\ast(f_1\otimes \cdots \otimes f_m)(x)
\\
&=\frac{1}{t^m}\int_{(B(0,t))^m}
\prod_{j=1}^m\frac{\Omega_j({y_j})}{|{y_j}|^{n-1}} \prod_{j=1}^m f_j(x-y_j)\;
d\vec{y},\nonumber
\\
&=\frac{1}{t^m}\prod_{j=1}^m \int_{B(0,t)}
\frac{\Omega_j({y_j})}{|{y_j}|^{n-1}} f_j(x-y_j)\; dy_j,\nonumber
\end{align}
Finally,
the multilinear Marcinkiewicz integral $\tilde{\mu}$ is defined by
\begin{align}\label{Marcinkiewicz integral1}
\tilde{\mu}(\vec{f})(x)=\left(\int_0^\infty |F_t(\vec{f})(x)|^2\;\frac{dt}{t}
\right)^{1/2}.
\end{align}
\end{definition}
It is easily seen that
\begin{equation*}
\tilde{\mu}(\vec{f})(x)\le C \biggl(\int_0^\infty \Bigl|
\frac{1}{t}\int_{B(0,t)}
\frac{\Omega_1({y})}{|{y}|^{n-1}} f_1(x-y_1)\Bigr|^2\;\frac{dt}{t}
\biggr)^{1/2}\prod_{j=2}^m Mf_j(x).
\end{equation*}
Hence, the $L^{p_1}\times L^{p_2}\times\cdots L^{p_m}\longrightarrow L^p$
boundedness follows easily for $1<p_1, p_2,\dots,p_m<\infty$ and
$1/p=1/p_1+1/p_2+\dots+1/p_m$.
For $\tilde{\mu}$, we have
\begin{theorem}\label{thm:existence-separated}
Theorems \ref{existence on BMO} and \ref{existence on Campanato space} are
still true for the operator $\tilde{\mu}$.
Theorem \ref{existence on Lipschitz space} also holds under the additional
condition $0<\alpha_j<1/m$, $j=1,\dots,m$.
\end{theorem}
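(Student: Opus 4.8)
The plan is to exploit the fact that the separated kernel forces $F_t$ to factor. Writing $G^j_t(g)(x)=\int_{B(0,t)}\frac{\Omega_j(y)}{|y|^{n-1}}\,g(x-y)\,dy$, the last line of \eqref{F_t-1} says exactly that $t^mF_t(\vec f)(x)=\prod_{j=1}^mG^j_t(f_j)(x)$, so every quantity that occurred in Sections \ref{proof 1.1} and \ref{Sec-4} now splits as a product over the $m$ variables, and each factor $G^j_t$ is a truncation of the classical $m=1$ Marcinkiewicz kernel. The three structural facts used before survive factor by factor: the vanishing moment \eqref{vanishing1} of each $\Omega_j$ permits subtracting an arbitrary constant from $f_j$ (which is what justifies replacing $f_j$ by $f_j-(f_j)_B$ on the balls that arise); the Lipschitz continuity \eqref{Lipschitz} of each $\Omega_j$ (with smoothness exponent $\beta$, as in Theorems \ref{existence on Campanato space}--\ref{existence on Lipschitz space}) yields the single-variable regularity bound $\bigl|\frac{\Omega_j(x-y)}{|x-y|^{n-1}}-\frac{\Omega_j(z-y)}{|z-y|^{n-1}}\bigr|\le C\frac{|x-z|^\beta}{|x-y|^{n-1+\beta}}$ for $|x-y|\ge 2|x-z|$; and the domination of $\tilde\mu$ by one square function times $m-1$ maximal functions, recorded just before the statement, supplies the $L^{p_1}\times\cdots\times L^{p_m}\to L^p$ bound that here plays the role of Theorem A.

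For the BMO and Campanato assertions I would rerun the proofs of Theorems \ref{existence on BMO} and \ref{existence on Campanato space} essentially verbatim. Split $\tilde\mu=\tilde\mu^r+\tilde\mu^\infty$ at $t=8r$ as in \eqref{<8r}--\eqref{>8r}; the local piece $\tilde\mu^r$ is handled by the $L^p$ bound applied to the truncations $(f_j-(f_j)_B)\chi_{10B}$, which are bounded with compact support, so nothing new is needed (this also covers the Lipschitz range, since on $10B$ these truncations are $O(r^{\alpha_j})$). The whole problem is then reduced, exactly as in \eqref{destination} and \eqref{destination2}, to bounding $|\tilde\mu^\infty(\vec f)(x)-\tilde\mu^\infty(\vec f)(z)|$ for $x,z\in B$. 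The single change is that the genuinely multilinear Lemma \ref{lem:BMO-n+beta} is no longer invoked: instead the product structure lets me telescope $\prod_jG^j_t(f_j)(x)-\prod_jG^j_t(f_j)(z)$ into $m$ summands, each carrying one difference factor $G^\ell_t(f_\ell)(x)-G^\ell_t(f_\ell)(z)$ and $m-1$ size factors. Decomposing the domain of the difference factor into the regions $\Xi,\Gamma,\Lambda$ of Section \ref{proof 1.1} and estimating it by the single-variable Lemmas \ref{lem:BMO-n-1} and \ref{lem:BMO-n-1+beta}, while bounding each size factor by the factored analogue of \eqref{estimate for F_t(f)} (resp. \eqref{estimate for F_t(f)2}), reproduces the tail estimates on the same parameter ranges; the $t$-integrations converge exactly as before.

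The Lipschitz statement is where the extra hypothesis is forced. For $f_j\in Lip_{\alpha_j}$ with $\alpha_j>0$ the size factors no longer remain $O(t)$: the single-variable version of \eqref{estimate for F_t(f)2} only gives $|G^\ell_t(f_\ell)(x)|\le Ct^{1+\alpha_\ell}\|f_\ell\|_{Lip_{\alpha_\ell}}$, so each telescoped summand is a product of $m-1$ growing factors times one regularity difference. The main obstacle, and the only genuinely new point, is the exponent bookkeeping: one must verify that after the $dt/t^{m+1-\alpha}$ integration over $(8r,\infty)$ every such product still yields a convergent tail of the correct size $r^{\alpha}$, and because a single index $\alpha_\ell$ can now sit in a $t$-power that is not tempered by the remaining factors, the naive estimates can fail when one $\alpha_\ell$ is large relative to the others. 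Requiring $0<\alpha_j<1/m$ is the clean sufficient condition that keeps every such exponent admissible, uniformly over which of the $m$ factors carries the difference; granting it, the argument closes precisely as in Theorem \ref{existence on Campanato space} and delivers $\|\tilde\mu(\vec f)\|_{Lip_\alpha}\le C\prod_{j=1}^m\|f_j\|_{Lip_{\alpha_j}}$.
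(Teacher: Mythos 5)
Your proposal is correct in outline, but it takes a genuinely different route from the paper, and the difference is worth recording. The paper never telescopes at the operator level: it re-runs the full multilinear region decomposition of Section \ref{proof 1.1}, writing $H_t\le\sum_{i=1}^5H_{t,i}+\sum_l H_{t,6}^l+\sum_l H_{t,7}^l$, and only then uses the factorization of the kernel inside each region --- telescoping the kernel difference for $H_{t,1}$, factoring the integrals for $H_{t,2},H_{t,3},H_{t,6}^l$, and, for $H_{t,4},H_{t,5}$, distributing $r=\prod_{i=1}^m r^{1/m}$ and the exponent $m+1-\alpha$ over the $m$ variables so as to apply Lemma \ref{lem:BMO-n+beta} factor by factor. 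That exponent distribution is exactly where $0<\alpha_j<1/m$ enters the paper's argument: the resulting single-variable integrals $\int_{(B(x,8r))^c} r^{1/m}|f_i(y_i)-(f_i)_B|\,|x-y_i|^{-(n+1/m-\alpha_i)}\,dy_i$ converge only when $\alpha_i<1/m$, and the same condition resurfaces in $H_{t,6}^\ell$ when $\alpha>0$. Your telescoping $\prod_j G_t^j(f_j)(x)-\prod_j G_t^j(f_j)(z)=\sum_\ell(\cdots)\bigl(G_t^\ell(f_\ell)(x)-G_t^\ell(f_\ell)(z)\bigr)(\cdots)$ instead reduces everything to $m$ single-variable differences, each handled by the $m=1$ region decomposition and Lemmas \ref{lem:BMO-n-1}, \ref{lem:BMO-n-1+beta}, times $m-1$ size factors bounded by $Ct^{1+\alpha_j}\|f_j\|$; this is cleaner and avoids the mixed-region terms entirely. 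Two caveats. First, you do still need the $m=1$ case of Lemma \ref{lem:BMO-n+beta} (or the equivalent dyadic computation) for the $\Gamma$-region of the single-variable difference, where the $t$-set has measure at most $2r$ and $t>|x-y|$, so your claim that that lemma is ``no longer invoked'' is only true of its genuinely multilinear form. Second, and more interestingly, your closing assertion that the hypothesis $0<\alpha_j<1/m$ is forced by your bookkeeping is not substantiated and appears to be wrong: with the weight $dt/t^{2+\alpha_\ell-2\alpha}$ that your telescoping produces, the $\Xi$-piece needs $2\alpha<\beta$ and the $\Lambda$- and $\Gamma$-pieces need $2\alpha<1$, all of which already follow from the standing hypothesis $\alpha<\beta/2\le 1/2$ of Theorem \ref{existence on Lipschitz space}; so your route seems not to need $\alpha_j<1/m$ at all --- the condition is an artifact of the paper's exponent-splitting, not of the problem. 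Since that condition is a hypothesis of the theorem being proved, assuming it is harmless, but you should either carry out the bookkeeping that allegedly requires it or note that your method dispenses with it.
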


\subsection{Proof of Theorem \ref{thm:existence-separated}}We only need
to show that:
For any $f_j\in\mathcal{E}^{\alpha_j,p_j}(\R^n)$, if there exists $x_0\in\R^n$
such that $\tilde{\mu}(\vec{f})(x_0))<\infty$, then for any ball $B\subset\R^n$
with $B\ni x_0$,
\begin{equation}\label{imp}
|\tilde{\mu}^\infty(\vec{f})(x_0)-\tilde{\mu}^\infty(\vec{f})(x)|
\le Cr^\alpha\prod_{j=1}^m
\|f_j\|_{\mathcal{E}^{\alpha_j,p_j}(\R^n)}.
\end{equation}
In fact, if the above inequality (\ref{imp}) holds, then
\begin{equation*}
\tilde{\mu}^\infty(\vec{f})(x)
\le \tilde{\mu}^\infty(\vec{f})(x_0)+Cr^\alpha\prod_{j=1}^m
\|f_j\|_{\mathcal{E}^{\alpha_j,p_j}(\R^n)}<+\infty.
\end{equation*}
Because of the $L^{p_1}\times L^{p_2}\times\cdots L^{p_m}\longrightarrow L^p$
boundedness, we have

\begin{equation*}
\biggl(\frac{1}{|B|}\int_B |\tilde{\mu}^r(\vec f)(x)|^p\,dx\biggr)^{1/p}
\le |B|^\alpha\prod_{j=1}^m
\|f_j\|_{\mathcal{E}^{\alpha_j,p_j}(\R^n)}<+\infty.
\end{equation*}
This implies that $\tilde{\mu}^r(\vec f)(x)<+\infty$ a.e. on $B$. Therefore,
 $\tilde{\mu}(\vec f)(x)<+\infty$ a.e. on $B$.

Since $B\ni x_0$ is arbitrary, it follows that
$\tilde{\mu}(\vec f)(x)<+\infty$ for almost every on $x\in \Rn$.
To show inequality (\ref{imp}), we follow the steps in the proof of
Theorems \ref{existence on BMO}-\ref{existence on Lipschitz space}.
As before, we see that
\begin{equation*}
\biggl|\int_{B(0,t)}\frac{\Omega_j({y_j})}{|{y_j}|^{n-1}} f_j(x-y_j)\; dy_j
\biggr|
\le Ct^{1+\alpha_j}\|f_j\|_{\mathcal E^{\alpha_j,1}}.
\end{equation*}
From this we see easily that the same estimates as \ref{eq:Ft-Campanato},
\ref{eq:B8r-i-Campanato}, and \ref{eq:B8r-Campanato} hold. Hence,
to show inequality (\ref{imp}) we have only to
show
\begin{equation*}
\int_{8r}^{\infty}|H_t(\vec{f})(x,z)|\;\frac{dt}{t^{m+1-\alpha}}
\le Cr^{2\alpha}.
\end{equation*}
where as before $H_t(\vec{f})(x,z)$ is defined by
\begin{align*}
&H_t(\vec{f})(x,z):=\biggl|
\int_{(B(z,t)\setminus B(z,8r))^m} \prod_{j=1}^m
\frac{\Omega_j(z-y_1)}{|z-y_j|^{n-1}}
 f_j(y_j)\; d\vec{y}\label{eq:Ht-f}
\\
&\hspace{4cm}-
\int_{(B(x,t)\setminus B(x,8r))^m} \prod_{j=1}^m
\frac{\Omega_j(x-y_1)}{|x-y_j|)^{n-1}}
 f_j(y_j)\; d\vec{y}\biggr|.\notag
\end{align*}
We may use the following estimate as before:
\begin{align*}
{}&H_t(\vec{f})(x,z)
\le\sum_{i=1}^5 H_{t,i}(\vec{f})(x,z)
+\sum_{l=1}^{m-1} H_{t,6}^l(\vec{f})(x,z)
+\sum_{l=1}^{m-1} H_{t,7}^l(\vec{f})(x,z) .
\end{align*}
\par\medskip\noindent
(I) As for $H_{t,1}(\vec f)(x,z)$, we get
\begin{align*}
|H_{t,1}(\vec f)(x,z)|
&\le
C\int_{\Xi(x,t)^m}\sum_{i=1}^{m}
\frac{r^\beta}{|x-y_i|^{n-1+\beta}}
\prod_{\ell\ne i}\frac{1}{|x-y_\ell|^{n-1}}
\prod_{j=1}^{m}|f_j(y)-(f_j)_B|\,d\vec{y}
\\
&\le C \sum_{j=1}^{m}\int_{8r\le|x-y_j|<t}
\frac{r^\beta|f_j(y)-(f_j)_B|}{|x-y_j|^{n-1+\beta}}\,dy_j
\prod_{\ell\ne j}\int_{8r\le|x-y_\ell|<t}
\frac{|f_\ell(y)-(f_\ell)_B|}{|x-y_\ell|^{n-1}}\,dy_\ell.
\end{align*}
(i) In the case $\alpha<0$, using Lemmas \ref{lem:BMO-n-1+beta} and
\ref{lem:BMO-n-1}, we have
\begin{equation*}
|H_{t,1}(\vec f)(x,z)|\le C\sum_{j=1}^{m}r^{\beta+\alpha_j}
t^{1-\beta}\prod_{\ell\ne j}(tr^{\alpha_\ell})
\prod_{j=1}^m\|f_j\|_{\mathcal {E}^{\alpha_j,1}}
=Cr^{\alpha+\beta}t^{m-\beta}\prod_{j=1}^m\|f_j\|_{\mathcal {E}^{\alpha_j,1}}.
\end{equation*}
So, we have
\begin{equation*}
\int_{8r}^{\infty}|H_{t,1}(\vec f)(x,z)|\frac{dt}{t^{m+1-\alpha}}
\le C r^{\beta+\alpha}\int_{8r}^{\infty}\frac{dt}{t^{1+\beta-\alpha}}
\prod_{j=1}^m\|f_j\|_{\mathcal {E}^{\alpha_j,1}}
\le Cr^{2\alpha}\prod_{j=1}^m\|f_j\|_{\mathcal {E}^{\alpha_j,1}}.
\end{equation*}

(ii) In the case $\alpha=0$, using Lemmas \ref{lem:BMO-n-1+beta} and
\ref{lem:BMO-n-1}, we have
\begin{equation*}
|H_{t,1}(\vec f)(x,z)|\le Cr^{\beta}t^{1-\beta}\Bigl(\log\frac{t}{r}\Bigr)
(t\log\frac{t}{r})^{m-1}
\prod_{j=1}^m\|f_j\|_{BMO}
=Cr^{\beta-\gamma}t^{m+\gamma-\beta}\prod_{j=1}^m\|f_j\|_{BMO},
\end{equation*}
for some $0<\gamma<\beta$.
So, we have
\begin{equation*}
\int_{8r}^{\infty}|H_{t,1}(\vec f)(x,z)|\frac{dt}{t^{m+1-\alpha}}
\le C r^{\beta-\gamma}\int_{8r}^{\infty}\frac{dt}{t^{1+\beta-\gamma}}
\prod_{j=1}^m\|f_j\|_{BMO}
\le C\prod_{j=1}^m\|f_j\|_{BMO}.
\end{equation*}

(iii) In the case $0<\alpha<\beta/2$,
using Lemmas \ref{lem:BMO-n-1+beta} and \ref{lem:BMO-n-1}, we have
\begin{equation*}
|H_{t,1}(\vec f)(x,z)|\le Cr^{\beta}t^{1-\beta+\alpha_j}
\prod_{\ell\ne j}t^{1+\alpha_j}
\prod_{j=1}^m\|f_j\|_{\mathcal {E}^{\alpha_j,1}}
=Cr^{\beta}t^{m+\alpha-\beta}\prod_{j=1}^m\|f_j\|_{\mathcal {E}^{\alpha_j,1}}.
\end{equation*}
So, we have
\begin{equation*}
\int_{8r}^{\infty}|H_{t,1}(\vec f)(x,z)|\frac{dt}{t^{m+1-\alpha}}
\le C r^{\beta}\int_{8r}^{\infty}\frac{dt}{t^{1+\beta-2\alpha}}
\prod_{j=1}^m\|f_j\|_{\mathcal {E}^{\alpha_j,1}}
\le Cr^{2\alpha}\prod_{j=1}^m\|f_j\|_{\mathcal {E}^{\alpha_j,1}}.
\end{equation*}
(II) As for $H_{t,2}(\vec{f})$, we have for $x,z\in B$,
\begin{align*}
&|H_{t,2}(\vec{f})(x,z)|
\leq C\int_{(B(x,10r)\backslash B(x,8r))^m}
\frac{\prod_{i=1}^m |f_i(y_i)-(f_i)_B|}{\prod_{i=1}^m|z-y_j|^{n-1}}
\;d\vec{y}
\\
&\leq C\prod_{i=1}^m \frac{1}{r^{n-1}}
\int_{B(x,10r)} |f_i(y_i)-(f_i)_B| \;dy_i
\\
&\leq Cr^{m+\alpha} \prod_{j=1}^m\|f_j\|_{\mathcal {E}^{\alpha_j,1}},
\end{align*}
which leads to
$$
\int_{8r}^\infty |H_{t,2}(\vec{f})(x,z)|\;\frac{dt}{t^{m+1-\alpha}}
\leq C r^{2\alpha}\prod_{j=1}^m\|f_j\|_{\mathcal {E}^{\alpha_j,1}},
$$
if $\alpha<m$. 
Similarly,
$$
\int_{8r}^\infty |H_{t,3}(\vec{f})(x,z)|\;\frac{dt}{t^{m+1-\alpha}}
\leq C r^{2\alpha}\prod_{j=1}^m\|f_j\|_{\mathcal {E}^{\alpha_j,1}}.
$$

\par\medskip\noindent
(III)
For $H_{t,4}(\vec{f})$, we have for any $x,z\in B$
$$
|\cap_{j=1}^m \Theta_j(x,y_j)|\leq |\Theta_i(x,y_i)|=|\Gamma(x,y_i)|
\leq ||z-y_i|-|x-y_i||\leq |z-x|\leq 2r,
$$
and for any $t\in \cap_{j=1}^m \Theta_j(x,y_j)$,
$t>\frac{1}{m}(\sum_{j=1}^m|x-y_j|)$.
 So, we obtain the following estimate:
\begin{align}\label{H_{t,4}}
&\int_{8r}^\infty |H_{t,4}(\vec{f})(x,z)|\;\frac{dt}{t^{m+1-\alpha}}
\\
&\leq C\int_{((B(x,8r))^c)^m}
\frac{\prod_{i=1}^m |f_i(y_i)-(f_i)_B|}{\prod_{i=1}^m|x-y_i|^{n-1}}
 \int_{\bigcap_{j=1}^m \Theta_j(x,y_j)} \frac{dt}{t^{m+1-\alpha}}\;d\vec{y}
 \notag
\\
&\leq C \int_{((B(x,8r))^c)^m}
\prod_{i=1}^m \frac{|f_i(y_i)-(f_i)_B|}{|x-y_i|^{n-1}}
\frac{r}{(\sum_{j=1}^m|x-y_j|)^{m+1-\alpha}} \;d\vec{y}.\nonumber
\\
&\le C\prod_{i=1}^m\int_{(B(x,8r))^c}
\frac{r^{1/m}|f_i(y_i)-(f_i)_B|}{|x-y_i|^{n+1/m-\alpha_i}}\,dy_i.\notag
\end{align}
We have a similar estimate for $H_{t,5}(\vec f)(x,z)$. Hence, using
Lemma \ref{lem:BMO-n+beta}, we get
\begin{align*}
&\int_{8r}^\infty |H_{t,4}(\vec{f})(x,z)|\;\frac{dt}{t^{m+1-\alpha}}
+\int_{8r}^\infty |H_{t,5}(\vec{f})(x,z)|\;\frac{dt}{t^{m+1-\alpha}}
\\
&\leq C\prod_{j=1}^m \int_{(B(x,8r))^c} \frac{r^{1/m}|f_j(y_j)-(f_j)_B|}
{|x-y_j|^{n+1/m-\alpha_j}}\;dy_j
+C\prod_{j=1}^m \int_{(B(z,8r))^c} \frac{r^{1/m}|f_j(y_j)-(f_j)_B|}
{|z-y_j|^{n+1/m-\alpha_j}}\;dy_j
\\
&\leq Cr^{2\alpha}\prod_{j=1}^m\|f_j\|_{\mathcal {E}^{\alpha_j,1}},
\end{align*}
if $\alpha_j<1/m$, $j=1,\dots,m$. 
\par\medskip\noindent
(IV) As for $H_{t,6}(\vec{f})$,
we see that
\begin{equation*}
|H_{t,6}^\ell(\vec{f})(x,z)|\le \prod_{j=1}^\ell
\int_{8r\le|x-y_j|<t}\frac{|f_j(y_j)-(f_j)_B|}{|x-y_j|^{n-1}}dy_j
\prod_{j=\ell+1}^m
\int_{8r\le|x-y_j|<10r}\frac{|f_j(y_j)-(f_j)_B|}{|x-y_j|^{n-1}}dy_j.
\end{equation*}
(i) In the case $\alpha<0$, we have by using Lemma \ref{lem:BMO-n-1}
\begin{align*}
|H_{t,6}^\ell(\vec{f})(x,z)|&\le
c\prod_{j=1}^\ell(t r^{\alpha_j})
\prod_{j=1}^\ell\|f_j\|_{\mathcal {E}^{\alpha_j,1}}
\prod_{j=\ell+1}^mr^{1+\alpha_j}
\prod_{j=\ell+1}^m\|f_j\|_{\mathcal {E}^{\alpha_j,1}}
=ct^\ell r^{m-\ell+\alpha}\prod_{j=1}^m\|f_j\|_{\mathcal {E}^{\alpha_j,1}}.
\end{align*}
Hence we have
\begin{equation*}
\int_{8r}^{\infty}|H_{t,6}^\ell(\vec{f})(x,z)|\frac{dt}{t^{m+1-\alpha}}
\le Cr^{m-\ell+\alpha}\int_{8r}^{\infty}\frac{dt}{t^{m-\ell+1-\alpha}}
\prod_{j=1}^m\|f_j\|_{\mathcal {E}^{\alpha_j,1}}
=Cr^{2\alpha}\prod_{j=1}^m\|f_j\|_{\mathcal {E}^{\alpha_j,1}}.
\end{equation*}
We have a similar estimate for $H_{t,7}^\ell(\vec{f})$.

(ii) In the case $\alpha=0$, we have by using Lemma \ref{lem:BMO-n-1}
\begin{align*}
|H_{t,6}^\ell(\vec{f})(x,z)|&\le
c\Bigl(t\log\frac{t}{r}\Bigr)^\ell\prod_{j=1}^\ell\|f_j\|_{BMO}
r^{m-\ell}\prod_{j=\ell+1}^m\|f_j\|_{BMO}
\le ct^{(1+\gamma)\ell}r^{m-(1+\gamma)\ell}\prod_{j=1}^m\|f_j\|_{BMO},
\end{align*}
for some sufficiently small $\gamma>0$.
Hence we have
\begin{equation*}
\int_{8r}^{\infty}|H_{t,6}^\ell(\vec{f})(x,z)|\frac{dt}{t^{m+1}}
\le Cr^{m-(1+\gamma)\ell}\int_{8r}^{\infty}
\frac{dt}{t^{m-(1+\gamma)\ell+1}}\prod_{j=1}^m\|f_j\|_{BMO}
=C\prod_{j=1}^m\|f_j\|_{BMO}.
\end{equation*}

(iii) In the case $\alpha>0$,
 we have by using Lemma \ref{lem:BMO-n-1}

\begin{align*}
|H_{t,6}^\ell(\vec{f})(x,z)|&\le
c\prod_{j=1}^\ell t^{1+\alpha_j}\|f_j\|_{\mathcal {E}^{\alpha_j,1}}
\prod_{j=\ell+1}^m r^{1+\alpha_j}\|f_j\|_{\mathcal {E}^{\alpha_j,1}}
=c t^{\ell+\sum_{j=1}^{\ell}\alpha_j} r^{m-\ell+\sum_{j=\ell+1}^m\alpha_j}
\prod_{j=1}^m\|f_j\|_{\mathcal {E}^{\alpha_j,1}}.
\end{align*}
Hence we have
\begin{align*}
\int_{8r}^{\infty}|H_{t,6}^\ell(\vec{f})(x,z)|\frac{dt}{t^{m+1-\alpha}}
&\le Cr^{m-\ell+\sum_{j=\ell+1}^m\alpha_j}
\int_{8r}^{\infty}\frac{dt}{t^{m-({\ell+\sum_{j=1}^{\ell}\alpha_j})+1-\alpha}}
\prod_{j=1}^m\|f_j\|_{\mathcal {E}^{\alpha_j,1}}
\\
&=Cr^{2m\alpha}\prod_{j=1}^m\|f_j\|_{\mathcal {E}^{\alpha_j,1}},
\end{align*}
if $\alpha-\alpha_j<1$, $j=1,\dots,m$, in particular,
if $\alpha_j<1/m$, $j=1,\dots,m$.  {\color{red} }

We have a similar estimate for $H_{t,7}^\ell(\vec{f})$.

This completes the proof of Theorem \ref{thm:existence-separated}.\qed
\subsection{More general type kernels}
\begin{definition}\label{def:mul-Mar-Int} (Multilinear Marcinkiewicz integral)
Let $\Omega$ be a function defined on $(\R^n)^m$ satisfying the conditions
(i), (ii) and (iii) in Definition \ref{def:mul-Mar-Int}.
For any $\vec{f}=(f_1,\cdots,f_m)\in \S(\Rn)\times \cdots \times \S(\Rn)$,
we define the operator $F_t$ for any $t>0$ as
\begin{align}
F_t(\vec{f})(x)&=\frac{1}{t^m}\int_{(B(0,t))^m} \frac{\Omega(\vec{y})}
{\prod_{i=1}^m|y_j|^{n-1}} \prod_{i=1}^m f_i(x-y_i)\; d\vec{y},
\label{F_t-moregeneral}
\end{align}
where $|\vec{y}|=|y_1|+\cdots+|y_m|$ and $B(x,t)=\{y\in \R^n:|y-x|\leq t\}$.
 Define the multilinear Marcinkiewicz integral $\mu$ by
\begin{align}\label{eq:Mar-Int}
\mu(\vec{f})(x)
=\left(\int_0^\infty |F_t(\vec{f})(x)|^2\;\frac{dt}{t}\right)^{1/2}.
\end{align}
\end{definition}
Then an open conjecture is as follows:
\begin{conjecture}
If $\Omega$ satisfies the conditions (i), (ii) and (iii) in Definition
\ref{def:mul-Mar-Int} and $\mu$ is bounded from
$L^{q_1}(\Rn)\times L^{q_2}(\Rn)\times \dots \times L^{q_m}(\Rn)$ to
$L^{q}(\Rn)$
for some $1<q_1,q_2,\dots,q_m<\infty$ with $1/q=1/q_1+1/q_2+\dots+1/q_m$, then, $\mu$ is bounded from
$L^{p_1}(\Rn)\times L^{p_2}(\Rn)\times \dots \times L^{p_m}(\Rn)$ to
$L^{p}(\Rn)$ for any $1<p_1,p_2,\dots,p_m<\infty$ with
$1/p=1/p_1+1/p_2+\dots+1/p_m$.
\end{conjecture}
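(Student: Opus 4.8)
The plan is to realize $\mu$ as a single $\mathcal{H}$-valued $m$-linear operator, with $\mathcal{H}=L^2((0,\infty),dt/t)$, and then to try to run the multilinear Calder\'on--Zygmund theory of Grafakos and Torres \cite{GT, multi C-Z} in its Banach-valued form. Writing $\vec{F}(\vec f)(x)=\big(F_t(\vec f)(x)\big)_{t>0}$ so that $\mu(\vec f)(x)=\|\vec F(\vec f)(x)\|_{\mathcal H}$, the operator $\vec F$ has the $\mathcal{H}$-valued kernel
\[
K_t(x;y_1,\dots,y_m)=\frac{1}{t^m}\,\frac{\Omega(x-y_1,\dots,x-y_m)}{\prod_{j=1}^m|x-y_j|^{n-1}}\,\prod_{j=1}^m\chi_{\{|x-y_j|\le t\}}.
\]
The assumed $L^{q_1}\times\cdots\times L^{q_m}\to L^q$ bound is precisely the single strong estimate from which the multilinear Calder\'on--Zygmund machine, once the kernel conditions are verified, would manufacture the weak endpoint bound $L^1\times\cdots\times L^1\to L^{1/m,\infty}$ and then, by multilinear interpolation and duality, the full range $1<p_1,\dots,p_m<\infty$ with $1/p=\sum_j 1/p_j$. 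So the entire question reduces to checking that $K_t$ is an admissible $\mathcal{H}$-valued kernel.

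First I would compute the $\mathcal{H}$-size. Since $\Omega$ is homogeneous of degree $0$ and bounded on $(\Sn)^m$, and $\prod_j\chi_{\{|x-y_j|\le t\}}\ne0$ exactly when $t\ge\max_j|x-y_j|$,
\[
\|K_\cdot(x;\vec y)\|_{\mathcal H}\asymp\frac{1}{\prod_{j=1}^m|x-y_j|^{n-1}\,\big(\max_{j}|x-y_j|\big)^{m}}.
\]
Here the difference from the original operator of Definition \ref{multilinear Marcinkiewicz integral} becomes decisive. For the joint denominator $\big(\sum_j|x-y_j|\big)^{m(n-1)}$ the same computation gives $\|K_\cdot(x;\vec y)\|_{\mathcal H}\asymp\big(\sum_j|x-y_j|\big)^{-mn}$, which is exactly the standard multilinear Calder\'on--Zygmund size, and then the regularity estimate follows from the Lipschitz condition \eqref{lip} together with the thin-shell control of the cutoff differences $\chi_{\{|x-y_j|\le t\}}-\chi_{\{|x'-y_j|\le t\}}$ integrated against $dt/t$ --- the very mechanism already used to bound the $H_t$ terms in \eqref{estimate for F_t(f)}. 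For the separated denominator $\prod_j|x-y_j|^{n-1}$ of Definition \ref{def:mul-Mar-Int}, however, the size bound carries a singularity along each partial diagonal $\{x=y_j\}$ and is \emph{not} dominated by $\big(\sum_j|x-y_j|\big)^{-mn}$; the standard multilinear Calder\'on--Zygmund hypotheses therefore fail, and this is the heart of the difficulty.

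The natural way to cope with the partial-diagonal singularities is to decouple the joint factor $\Omega$. Expanding $\Omega$ on $(\Sn)^m$ in tensor products of spherical harmonics, $\Omega=\sum_{\vec k} c_{\vec k}\prod_{j=1}^m Y_{k_j}(y_j')$, reduces matters to the separated kernels of Definition \ref{multilinear Marcinkiewicz integral with separated kernels}, for which the excerpt already shows that $\tilde\mu(\vec f)$ factors as a one-dimensional linear Marcinkiewicz square function in one variable times maximal operators $Mf_j$ in the others, giving the full $L^p$ range term by term. The hard part will be to sum the resulting series: this demands quantitative bounds on the $L^{p_1}\times\cdots\times L^{p_m}\to L^p$ norm of each separated piece in terms of the degrees $k_j$, and those bounds must decay fast enough against the coefficients $c_{\vec k}$ to converge throughout $1<p_i<\infty$. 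Because $\Omega$ is assumed only Lipschitz on $(\Sn)^m$, rather than smooth, the coefficients $c_{\vec k}$ decay only polynomially, while the partial-diagonal singularities force the per-term operator norms to grow as the $p_i$ approach their endpoints; reconciling these two competing rates --- equivalently, proving a uniform-in-$\vec k$ multilinear bound that survives the summation near the endpoints --- is exactly the obstruction that leaves the statement a conjecture rather than a theorem.
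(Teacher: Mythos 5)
Your attempt does not prove the statement, and no comparison with a paper proof is possible, because the paper offers none: this is stated there as an open conjecture, and the paper only verifies special cases (Example \ref{ex:example-1}, where $\Omega$ is itself a tensor product, and Example \ref{ex:example-2}, treated in Claim 1). Your own final sentence concedes the decisive gap. After correctly reformulating $\mu$ as an $L^2(dt/t)$-valued $m$-linear operator and correctly observing that the separated denominator $\prod_{j}|x-y_j|^{n-1}$ of Definition \ref{def:mul-Mar-Int} produces partial-diagonal singularities whose $\mathcal{H}$-size is not dominated by $\bigl(\sum_j|x-y_j|\bigr)^{-mn}$ --- so that the Grafakos--Torres machinery cannot be invoked --- you fall back on an expansion of $\Omega$ into tensor products of spherical harmonics and then admit that the resulting series cannot be summed: the coefficients of a merely Lipschitz $\Omega$ decay only polynomially, while the per-term operator norms grow with the degree. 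Identifying the obstruction is a diagnosis, not a proof; as written, the argument establishes nothing beyond what is already in the paper.

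It is worth noting that your proposed mechanism is exactly the one the paper exploits in its only nontrivial special case, which shows both why the strategy is natural and why it stalls. In Claim 1, $\Omega=\sin\bigl(\prod_j\Omega_j(x_j)\bigr)$ is expanded in a power series whose terms $\bigl(\prod_j\Omega_j\bigr)^{2k+1}/(2k+1)!$ are separated kernels; each term is controlled by a linear Marcinkiewicz function times maximal operators, with operator norm growing like $kA^k$ as in \eqref{eq:mu-1-1} (reflecting the growth of the Lipschitz norm of $\Omega_1^{2k+1}$), and the proof closes only because the factorial decay $1/(2k+1)!$ beats that growth. For general Lipschitz $\Omega$ there is no analogue of this factorial decay, which is precisely why the statement remains open. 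So your attempt is in the right spirit and its analysis of the difficulty is sound, but it stops short of a proof --- and, unlike the paper, it does not even carry out the special cases in detail.
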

We can only prove it for some special case, here we give two examples.
\begin{example}\label{ex:example-1}
\begin{equation*}
\Omega(x)=\prod_{j=1}^m \Omega_j(x_j),
\end{equation*}
where $\Omega_j$ is homogeneous of degree 0 on $\Rn$, Lipschitz continuous on
$S^{n-1}$, and $\int_{S^{n-1} }\Omega_j(y_j) \; dy_j=0$ for
$j=1,\dots,m$.
\end{example}
\begin{example}\label{ex:example-2}
\begin{equation*}
\Omega(x)=\sin\Bigl(\prod_{j=1}^m \Omega_j(x_j)\Bigr),
\end{equation*}
where $\Omega_j$ is odd and homogeneous of degree 0 on $\Rn$, and Lipschitz continuous on
$S^{n-1}$ for $j=1,\dots,m$.
\end{example}


{\bf Claim 1. }
For $1<p_1,p_2,\dots,p_m<\infty$ with $1/p=1/p_1+1/p_2+\dots+1/p_m$,
$\mu$ in Example \ref{ex:example-2} is bounde from
$L^{p_1}(\Rn)\times L^{p_2}(\Rn)\times \dots \times L^{p_m}(\Rn)$ to
$L^{p}(\Rn)$.
\begin{proof}
Since 
$\sin z=\sum_{k=0}^{\infty}(-1)^k\frac{z^{2k+1}}{(2k+1)!}$  for
$|z|<\infty$, we have
\begin{equation}
\sin\Bigl(\prod_{j=1}^m \Omega_j(x_j)\Bigr)
=\sum_{k=0}^{\infty}(-1)^k\frac{(\prod_{j=1}^m \Omega_j(x_j))^{2k+1}}{(2k+1)!}.
\end{equation}
Since the above convergence of $\sin z$ is uniform on every compact set of
$\C$, and $\Omega_j$'s are Lipschitz continuous on $\Sn$, this convergence is
uniform on $B(0,t)\times B(0,t)\times\dots\times B(0,t)$ for any fixed
$0\le t<\infty$. Hence, for any $f_j\in\S(\Rn)$ $(j=1,\dots,m)$, we have
\begin{align}
&\lim_{N\to\infty}\sum_{k=0}^{N}\frac{(-1)^k}{(2k+1)!}
\int_{(B(0,t))^m}\frac{(\prod_{j=1}^m \Omega_j(y_j))^{2k+1}}
{\prod_{j=1}^m|y_j|^{n-1}}\prod_{j=1}^m f_j(x-y_j)\,d\vec y
\\
&=\int_{(B(0,t))^m}
\frac{\Omega(y_1,y_2,\dots,y_m)}
{\prod_{j=1}^m|y_j|^{n-1}}\Pi_{j=1}^m f_j(x-y_j)\,d\vec y. \notag
\end{align}
So, by Fatou's Lemma we get
\begin{align}
&\mu(\vec f)(x)\label{eq:mul-mar-int}
\\
&=\biggl(\int_0^\infty\biggl|\frac{1}{t^m}\int_{(B(0,t))^m}
\frac{\Omega(y_1,y_2,\dots,y_m)}
{\prod_{j=1}^m|y_j|^{n-1}}\prod_{j=1}^m f_j(x-y_j)\,d\vec y\biggr|^2
\frac{dt}{t}\biggr)^{1/2}\notag
\\
&=\biggl(\int_0^\infty\lim_{N\to\infty}\biggl|\frac{1}{t^m}
\sum_{k=0}^{N}\frac{(-1)^k}{(2k+1)!}
\int_{(B(0,t))^m}\frac{(\prod_{j=1}^m \Omega_j(y_j))^{2k+1}}
{\prod_{j=1}^m|y_j|^{n-1}}\prod_{j=1}^m f_j(x-y_j)\,d\vec y\biggr|^2
\frac{dt}{t}\biggr)^{1/2}\notag
\\
&\le \liminf_{N\to\infty}\biggl(\int_0^\infty\biggl|\frac{1}{t^m}
\sum_{k=0}^{N}\frac{(-1)^k}{(2k+1)!}
\int_{(B(0,t))^m}\frac{(\prod_{j=1}^m \Omega_j(y_j))^{2k+1}}
{\prod_{j=1}^m|y_j|^{n-1}}\prod_{j=1}^m f_j(x-y_j)\,d\vec y\biggr|^2
\frac{dt}{t}\biggr)^{1/2} \notag
\\
&\le \liminf_{N\to\infty}\sum_{k=0}^{N}\frac{(-1)^k}{(2k+1)!}
\biggl(\int_0^\infty\biggl|\frac{1}{t^m}
\int_{(B(0,t))^m}\frac{(\prod_{j=1}^m \Omega_j(y_j))^{2k+1}}
{\prod_{j=1}^m|y_j|^{n-1}}\prod_{j=1}^m f_j(x-y_j)\,d\vec y\biggr|^2
\frac{dt}{t}\biggr)^{1/2} \notag
\\
&=\sum_{k=0}^{\infty}\frac{(-1)^k}{(2k+1)!}
\biggl(\int_0^\infty\biggl|\prod_{j=1}^m \frac{1}{t}
\int_{B(0,t)}\frac{(\Omega_j(y_j))^{2k+1}}
{|y_j|^{n-1}}f_j(x-y_j)\,dy_j\biggr|^2\frac{dt}{t}\biggr)^{1/2}. \notag
\end{align}
We see that
\begin{equation*}
\biggl|\frac{1}{t}
\int_{B(0,t)}\frac{(\Omega_j(y_j))^{2k+1}}
{|y_j|^{n-1}}f_j(x-y_j)\,dy_j\biggr|
\le C\|\Omega\|_\infty^{2k+1}Mf_j(x).
\end{equation*}
So, we have
\begin{align}
&\biggl(\int_0^\infty\biggl|\prod_{j=1}^m \frac{1}{t}
\int_{B(0,t)}\frac{(\Omega_j(y_j))^{2k+1}}
{|y_j|^{n-1}}f_j(x-y_j)\,dy_j\biggr|^2\frac{dt}{t}\biggr)^{1/2}
\label{eq:max-mar}
\\
&\le C\|\Omega\|_\infty^{2k+1}\prod_{j=2}^mMf_j(x)\biggl(\int_0^\infty
\biggl|\frac{1}{t}\int_{B(0,t)}\frac{(\Omega_1(y_1))^{2k+1}}
{|y_1|^{n-1}}f_1(x-y_1)\,dy_1\biggr|^2\frac{dt}{t}\biggr)^{1/2},\notag
\end{align}
We see that
\begin{equation*}
\|(\Omega_1(y_1))^{2k+1}\|_{\mathrm{Lip}_\alpha(\Sn)}\le Ck
\|\Omega_1\|_\infty^{2k+1}\|\Omega_1\|_{\mathrm{Lip}_\alpha(\Sn)},
\end{equation*}
which implies that for every $1<q<\infty$ there exist some $A>0$ and $C_q>0$
 such that
\begin{equation}\label{eq:mu-1-1}
\biggl\|\biggl(\int_0^\infty
\biggl|\frac{1}{t}\int_{B(0,t)}\frac{(\Omega_1(y_1))^{2k+1}}
{|y_1|^{n-1}}f_1(x-y_1)\,dy_1\biggr|^2\frac{dt}{t}\biggr)^{1/2}
\biggr\|_{L^q(\Rn)}
\le C_qkA^k\|f_1\|_{L^q(\Rn)}.
\end{equation}
By \eqref{eq:mul-mar-int}, \eqref{eq:max-mar}, \eqref{eq:mu-1-1},
the $L^p$ boundedness of the Hardy-Littlewood maximal function,
Minkowski's inequality and Young's inequality, we obtain
for $1<p_1,p_2,\dots,p_m<\infty$ with $1/p=1/p_1+1/p_2+\dots+1/p_m$
\begin{equation}
\|\mu(\vec f)\|_{L^p(\Rn)}
\le C\sum_{k=0}^{\infty}\frac{(-1)^k}{(2k+1)!}kA^k\prod_{j=1}^{m}
\|f_j\|_{L^{p_j}(\Rn)}.
\end{equation}
which completes the proof of our Claim 1.

\end{proof}


\end{document}